\numberwithin{equation}{section}      
\theoremstyle{definition}                    
\newtheorem{theorem}{Theorem}[section]      
\newtheorem{corollary}[theorem]{Corollary}      
\newtheorem{lemma}[theorem]{Lemma}            
\theoremstyle{definition}               
\newtheorem{definition}[theorem]{Definition}
\newtheorem{remark}[theorem]{Remark}
\def\R{\mathbb R}
\def\N{\mathbb N}
\def\E{\mathbb E}
\def\1{\mathds{1}}
\def\st#1#2{{\mathrel{\mathop{#2}\limits_{#1}}{}\!}}
\def\one{\mbox{$1\!\!\!\,\rule{0,1mm}{2,4mm}\,$}}
\def\Box{\rule{2mm}{2mm}}
\def\calf{{\mathcal{F}}}
\def\calh{\mathcal{H}}
\def\calo{\mathcal{O}}
\def\calf{\mathcal{F}}
\def\calk{{\mathcal{K}}}
\def\calh{{\mathcal{H}}}
\def\cals{{\mathcal{S}}}
\def\st#1#2{{\mathrel{\mathop{#2}\limits_{#1}}{}\!}}
\def\bit{\begin{itemize}}
\def\eit{\end{itemize}}
\def\vsp{\vspace*{1,5mm}\\ }
\def\n{\noindent }
\def\bk{\bigskip }
\def\mk{\medskip }
\def\oo{{\omega}}
\def\ooo{{\Omega}}
\def\<{\left<}
\def\>{\right>}
\def\({\left(}
\def\){\right)}
\def\ff{\forall }
\def\9{{\infty}}
\def\barr{\begin{array}}
\def\earr{\end{array}}
\def\ov{\overline}
\def\wt{\widetilde}
\def\vp{{\varepsilon}}
\def\pp{{\partial}}
\def\dd{\displaystyle}
\def\a{{\alpha}}
\def\b{{\beta}}
\def\vf{{\varphi}}
\def\lbb{{\lambda}}
\def\g{{\gamma}}
\def\pas{\mathbb{P}\mbox{-a.s.}}
\def\n{\noindent }
\def\na{{\nabla}}
\def\onr{\mbox{ on }}
\def\fwg{following}
 \def\1{^{-1}}
\journal{Journal de math\'ematiques pures et appliqu\'ees}
\begin{document}
\begin{frontmatter}



\title{Stochastic Porous Media Equations in $\R^d$
\\
Equations de milieux poreux dans $\R^d$}
\author[iasi]{Viorel BARBU}
\ead{vbarbu41@gmail.com}
\author[bielefeld]{Michael R\"OCKNER}
\ead{roeckner@math.uni-bielefeld.de}
\author[ensta]{Francesco RUSSO}
\ead{francesco.russo@ensta-paristech.fr}
\address[iasi]{Octav Mayer Institute of Mathematics (Romanian Academy), Blvd. Carol I no. 8, Ia\c si, 700506, Romania.}
\address[bielefeld]{Universit\"at Bielefeld, D-33501 Bielefeld, Germany.}
\address[ensta]{ENSTA ParisTech, Unit\'e de Math\'ematiques Appliqu\'ees,
 828, Bd. des Mar\'echaux, F-91120 Palaiseau}

\begin{abstract}
Existence and uniqueness of
solutions to the stochastic porous media equation
 $dX-\Delta\psi(X) dt=XdW$ in $\R^d$ are studied.
 Here, $W$ is a Wiener process, $\psi$ is a maximal monotone graph in $\R\times\R$ such that
 $\psi(r)\le C|r|^m$, $\ff r\in\R$.
In this general case, the dimension is restricted to $d\ge 3$, the main reason being the absence of a convenient multiplier result in the space $\calh=\{\varphi\in\mathcal{S}'(\R^d);\ |\xi|(\calf\varphi)(\xi)\in L^2(\R^d)\}$, for $d\le2$.
When $\psi$ is Lipschitz, the well-posedness, however, holds for all dimensions on the classical Sobolev space $H^{-1}(\R^d)$.
 If $\psi(r)r\ge\rho|r|^{m+1}$ and $m=\frac{d-2}{d+2}$, we prove the
finite time extinction with strictly positive probability.

\noindent
\\
\textbf{R\'esum\'e}\\

\noindent
Nous \'etudions existence et unicit\'e pour les solutions
d'une \'equation de milieux poreux
 $dX-\Delta\psi(X) dt=XdW$ dans $\R^d$.
 Ici $W$ est un processus de Wiener, $\psi$ 
est un graphe maximal monotone dans $\R\times\R$ tel que
 $\psi(r)\le C|r|^m$, $\ff r\in\R$.
Dans ce contexte g\'en\'eral, la dimension est restreinte \`a
 $d\ge 3$, essentiellement compte tenu de l'absence d'un
r\'esultat ad\'equat de multiplication dans l'espace
 $\calh=\{\varphi\in\mathcal{S}'(\R^d);\ |\xi|(\calf\varphi)(\xi)\in L^2(\R^d)\}$, pour $d\le2$.
Lorsque $\psi$ est Lipschitz, le probl\`eme est n\'eanmoins bien pos\'e 
pour toute dimension dans l'espace de  Sobolev classique $H^{-1}(\R^d)$.
 Si $\psi(r)r\ge\rho|r|^{m+1}$ et $m=\frac{d-2}{d+2}$, nous prouvons
 une propri\'et\'e d' extinction en temps fini avec probabilit\'e strictement
positive.

\begin{keyword} 
Stochastic \sep porous media \sep 
Wiener process \sep maximal monotone graph \sep distributions.

\MSC[2010] 76S05 \sep 60H15 \sep 35K55 \sep 35K67.
\end{keyword}
\end{abstract}

\end{frontmatter}

\section{Introduction}

Consider the stochastic porous media equation
\begin{equation}\label{e1.1}
\barr{l}
dX-\Delta\psi(X)dt=XdW\mbox{ in }(0,T)\times\R^d,\vsp
X(0)=x\mbox{ on }\R^d,\earr\end{equation}
where $\psi$ is a monotonically nondecreasing function on $\R$ (eventually multivalued) and $W(t)$ is a Wiener process of the form

\begin{equation}\label{e1.2}
W(t)=\sum^\9_{k=1}\mu_ke_k\b_k(t),\ t\ge0.\end{equation}
Here $\{\b_k\}^\9_{k=1}$ are   independent Brownian motions on a stochastic basis 
 $\{\ooo,\calf,\calf_t,\mathbb{P}\}$,  $\mu_k\in\R$ and $\{e_k\}^\9_{k=1}$ is an orthonormal basis in $H^{-1}(\R^d)$ or $\calh^{-1}$ (see \eqref{e2.2} below) to be made precise later on.

On bounded domains $\calo\subset\R^d$  with Dirichlet homogeneous boundary conditions,  equation \eqref{e1.1} was studied in \cite{BDR08}, \cite{BDR09}, \cite{BDPR09}, under general assumptions on $\psi:\R\to\ov\R$ (namely, maximal monotone multivalued graph with polynomial growth, or even more general growth conditions in \cite{BDR09}). It should be said, however, that there is a principial difference between  bounded and unbounded domains, mainly due to the multiplier problem in Sobolev spaces on $\R^d$. If $d\ge3$ and $\calo=\R^d$, existence and uniqueness of solutions to \eqref{e1.1} was proved in \cite{Ren} (see, also, \cite{15prim}) in a general setting which covers the case $\calo=\R^d$ (see Theo\-rems 3.9, Proposition 3.1 and Example 3.4 in \cite{Ren}). However, it should be said that in \cite{Ren} $\psi$ is assumed continuous, such that $r\psi(r)\to\infty$ as $r\to\infty$, which we do not need in this paper.

 We study
the existence and uniqueness of \eqref{e1.1} under two different sets of conditions requiring a dif\-fe\-rent functional approach. The first one, which will be presented in Section 3, assumes that $\psi$ is monotonically non\-decreasing and Lipschitz. The state space for \eqref{e1.1} is, in this case, $H^{-1}(\R^d)$, that is, the dual of the classical Sobolev space $H^1(\R^d)$.
 In spite of the apparent lack of generality ($\psi$ Lipschitz), it should be mentioned that there are physical models described by such an equation as, for instance, the two phase Stefan transition problem perturbed by a stochastic Gaussian noise \cite{1az}; moreover, in this latter case there is no restriction on the dimension $d$.

 The second case, which will be studied in Section 4, is that where $\psi$ is a maximal monotone multivalued function with at most polynomial growth. An important physical problem covered by this case is the self-organized cri\-ti\-ca\-lity model
 \begin{equation}
 \label{1.3}
 dX-\Delta H(X-X_c)dt=(X-X_c)dW,\end{equation}  where $H$ is the Heaviside function and $X_c$ is the critical state (see \cite{BDPR09}, \cite{5}, \cite{6}). More generally, this equation with discontinuous $\psi$ covers the stochastic nonlinear diffusion equation with singular diffusivity $D(u)=\psi'(u).$

 It should be mentioned that, in this second case, the solution $X(t)$ to \eqref{e1.1} is defined in a certain distribution space $\calh^{-1}$ (see \eqref{e2.2} below) on $\R^d$ and the existence is obtained for $d\ge3$ only, as in the case of continuous $\psi$ in \cite{Ren}. The case $1\le d\le2$ remains open due to the absence of a multiplier rule in the norm $\|\cdot\|_{\calh^{-1}}$ (see Lemma \ref{l4.3a} below).

 In Section 5, we prove  the finite time extinction of the solution $X$ to \eqref{e1.1} with strictly positive probability under the assumption that $\psi(r)r\ge\rho|r|^{m+1}$ and $m=\frac{d-2}{d+2}\,\cdot$

Finally, we would like to comment on one type of noise. Existence and uniqueness can be proved with $g(t,X(t))$ by replacing $X(t)$ under (more or less the usual) abstract conditions on $\sigma$ (see, e.g., \cite{Ren}, \cite{15prim}). The main reason why in this paper we restrict ourselves to linear multiplicative noise  is that first we want to be concrete, second the latter case is somehow generic (just think of taking the Taylor expansion of $\sigma(t,\cdot)$ up to first order), and third for this type of noise we prove finite time extinction in Section 5.

\section{Preliminaries}
\setcounter{equation}{0}

To begin with, let us briefly recall a few definitions pertaining distribution spaces on $\R^d$,
  whose  classical Euclidean norm will be denoted by $\vert \cdot \vert$.

Denote by $\cals'(\R^d)$ the space of all temperate distributions on $\R^d$ (see, e.g., \cite{8a}) and by $\calh$ the space
\begin{equation}\label{e2.1}
\calh=\{\vf\in\cals'(\R^d);\ \xi \mapsto  |\xi|\calf(\vf)(\xi)\in L^2(\R^d)\},\end{equation}
where $\calf(\vf)$ is the Fourier transform of $\vf$. We denote by
$L^2(\R^d)$ the space of square integrable functions on $\R^d$ with  norm   $|\cdot|_2$ and scalar pro\-duct $\<\cdot,\cdot\>_2$.
In general $\vert \cdot \vert _p$ will denote the norm of $L^p(\R^d)$ or $L^p(\R^d;\R^d), \ 1 \le p \le \infty$.
 The dual space $\calh\1$ of $\calh$ is given by
\begin{equation}\label{e2.2}
\calh\1=\{\eta\in\cals'(\R^d);\ \xi \mapsto
\calf(\eta)(\xi)|\xi|\1\in L^2(\R^d)\}.\end{equation}
The duality between $\calh$ and $\calh\1$ is denoted by $\<\cdot,\cdot\>$ and is given by
\begin{equation}\label{e2.3}
\<\vf,\eta\>=\int_{\R^d}\calf(\vf)(\xi)\ov{\calf(\eta)}(\xi)d\xi\end{equation}and the norm of $\calh$ denoted by $\|\cdot\|_1$ is given by
\begin{equation}\label{e2.4}
\|\vf\|_1=\(\int_{\R^d}|\calf(\vf)(\xi)|^2
|\xi|^2d\xi\)^{\frac12}=
\(\int_{\R^d}|\na\vf|^2d\xi\)^{\frac12}.\end{equation}
The norm of $\calh\1$, denoted by $\|\cdot\|_{-1}$ is given by
\begin{equation}\label{e2.5}
\|\eta\|_{-1}=\(\int_{\R^d}|\xi|^{-2}
|\calf(\eta)(\xi)|^2d\xi\)^{\frac12}=
\(\<(-\Delta)^{-1}\eta,\eta\>\)^{\frac12}.\end{equation}
(We note that the operator $-\Delta$ is an isomorphism from $\calh$ onto $\calh\1$.) The scalar product of $\calh\1$ is given by
\begin{equation}\label{e2.6}
\<\eta_1,\eta_2\>_{-1}=
\<(-\Delta)^{-1}\eta_1,\eta_2\>.\end{equation}As regards the relationship of $\calh$ with the space $L^p(\R^d)$ of $p$-summable functions on $\R^d$, we have the following.

\begin{lemma}\label{l1.1} Let $d\ge3$. Then we have
\begin{equation}\label{e2.7}
\calh\subset L^{\frac{2d}{d-2}}(\R^d)\end{equation}algebraically and topologically.\end{lemma}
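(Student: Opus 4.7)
The plan is to recognise $\calh$ as (a realisation of) the homogeneous Sobolev space $\dot H^1(\R^d)$, whose norm has already been identified with $|\na\vf|_2$ in \eqref{e2.4}, and then to invoke the sharp Sobolev embedding $\dot H^1(\R^d)\hookrightarrow L^{2d/(d-2)}(\R^d)$. Given $\vf\in\calh$, I would first set $g(\xi):=|\xi|\calf(\vf)(\xi)\in L^2(\R^d)$ and $f:=\calf\1(g)\in L^2(\R^d)$, so that $|f|_2=\|\vf\|_1$ by Plancherel.

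The main step is then to check that $\vf$ itself admits the Riesz-potential representation
\[
\vf=c_d\,I_1f,\qquad \calf(I_1f)(\xi)=c_d\,|\xi|\1\calf(f)(\xi),
\]
as tempered distributions. This identification works precisely because, for $d\ge 3$, the function $\xi\mapsto|\xi|\1$ is locally integrable at the origin; coupled with $g\in L^2(\R^d)$ and Cauchy--Schwarz this gives $g/|\xi|\in L^1_{\rm loc}(\R^d)\subset\cals'(\R^d)$, so $\calf\1(g/|\xi|)$ is a well-defined tempered distribution, necessarily equal to $\vf$. This is the decisive use of the hypothesis $d\ge 3$ and is, in my view, the main obstacle; it also removes any polynomial ambiguity in $\vf$, since $\calf(\vf)=g/|\xi|$ turns out to be an honest locally integrable function, not just a distribution supported at $\{0\}$.

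Once the representation $\vf=c_d I_1 f$ is in place, the classical Hardy--Littlewood--Sobolev inequality with parameters $p=2$, $\alpha=1$, $q=2d/(d-2)$ (valid exactly when $d\ge 3$) yields
\[
|\vf|_{\frac{2d}{d-2}}\le K_d\,|f|_2=K_d\,\|\vf\|_1,
\]
which is simultaneously the set-theoretic inclusion $\calh\subset L^{2d/(d-2)}(\R^d)$ and its topological counterpart. An equivalent route would be to approximate $\vf$ by Schwartz functions $\vf_n$ in the $\|\cdot\|_1$-norm, apply the standard Sobolev--Gagliardo--Nirenberg inequality to each $\vf_n$, and pass to the limit in $L^{2d/(d-2)}(\R^d)$; the obstruction in low dimensions $d\le 2$ would surface in exactly the same way, through the non-integrability of $|\xi|\1$ near the origin.
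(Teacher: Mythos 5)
Your argument is correct, but it takes a genuinely different route from the paper's. The paper's proof is two lines: it quotes the Sobolev (Gagliardo--Nirenberg) inequality $|\vf|_{\frac{2d}{d-2}}\le C|\na\vf|_2$ for $\vf\in C^\9_0(\R^d)$ and concludes by density of $C^\9_0(\R^d)$ in $\calh$ --- essentially the ``equivalent route'' you mention in your last sentence. Your main route instead realises each $\vf\in\calh$ as a Riesz potential $c_d I_1 f$ with $f=\calf^{-1}\bigl(|\xi|\calf(\vf)\bigr)\in L^2(\R^d)$ and invokes Hardy--Littlewood--Sobolev with $p=2$, $\alpha=1$, $q=\frac{2d}{d-2}$. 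This buys two things: it makes completely explicit where $d\ge3$ enters (your Cauchy--Schwarz step needs $\int_{|\xi|\le1}|\xi|^{-2}d\xi<\9$, i.e.\ $|\xi|^{-1}\in L^2_{\rm loc}$ near the origin, which holds exactly for $d\ge3$; note this is the real constraint, not the mere local integrability of $|\xi|^{-1}$ that you cite first, which already holds for $d=2$), and it sidesteps the density of $C^\9_0(\R^d)$ in $\calh$ for the homogeneous norm, a point the paper leaves implicit. The price is the need to justify the identity $\calf(I_1f)=c_d|\xi|^{-1}\calf(f)$ for general $f\in L^2$ and to rule out the ambiguity of distributions supported at the origin, which you do correctly by observing that $\calf(\vf)=g/|\xi|$ is an honest locally integrable function. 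Both proofs are sound.
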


Indeed, by the
Sobolev embedding theorem (see, e.g., \cite{7}, p. 278), we~have
$$|\vf|_{\frac{2d}{d-2}}\le C|\na \vf|_2,\ \ff\vf\in C^\9_0(\R^d),$$and, by density,
 this implies \eqref{e2.7}, as~claimed.

It should be mentioned that \eqref{e2.7} is no longer true for $1\le d\le2$. \mbox{However,} by duality, we have
\begin{equation}\label{e2.8}
L^{\frac{2d}{d+2}}(\R^d)\subset\calh\1,\ \ff d\ge3.\end{equation}

Denote by $H^1(\R^d)$ the Sobolev space
$$\barr{lcl}
H^1(\R^d)&=&\{u\in L^2(\R^d);\ \na u\in L^2(\R^d)\}\vsp
&=&\{u\in L^2(\R^d);\ \xi \mapsto \calf(u)(\xi)
 (1+ |\xi|^2)^{\frac12} \in L^2(\R^d)\}\earr$$with norm
 $$|u|_{H^1(\R^d)}=\(\dd\int_{\R^d}(u^2+|\na u|^2)d\xi\)^{\frac12}
 =\(\dd\int_{\R^d}|\calf u(\xi)|^2(1+|\xi|^2)d\xi\)^{\frac12}$$
and by $H\1(\R^d)$ its dual, that is,
$$H\1(\R^d)=\{u\in\cals'(\R^d);\ \calf(u)(\xi)(1+|\xi|^2)^{-\frac12}\in L^2(\R^d)\}.$$
The norm of $H\1(\R^d)$ is denoted by $|\cdot|_{-1}$ and its scalar product by $\left<\cdot,\cdot\right>_{-1}$. We have the continuous and dense embeddings
$$H^1(\R^d)\subset\calh,\ \calh\1\subset H\1(\R^d).$$ It should be emphasized, however, that $\calh$ is not a subspace of $L^2(\R^d)$ and so $L^2(\R^d)$ is not the pivot space in the duality $\<\cdot,\cdot\>$ given by \eqref{e2.3}.

Given a Banach space $Y$, we denote by $L^p(0,T;Y)$ the space of all $Y$-valued $p$-integrable functions on $(0,T)$ and by $C([0,T];Y)$ the space of continuous $Y$-valued functions on $[0,T]$. For two Hilbert spaces $H_1,H_2$ let $L(H_1,H_2)$ and $L_2(H_1,H_2)$ denote the set of all bounded linear and Hilbert-Schmidt operators, respectively. We refer to \cite{dpz}, \cite{rockpre} for definitions and basic results pertaining infinite dimensional stochastic processes.

\section{Equation \eqref{e1.1} with the Lipschitzian $\psi$}
\setcounter{equation}{0}

Consider here equation \eqref{e1.1} under the \fwg\ conditions.
\bit\item[(i)] $\psi:\R\to\R$ is monotonically nondecreasing,  Lipschitz
 such that \mbox{$\psi(0)=0$.}
\item[(ii)] $W$ is a Wiener  process as in \eqref{e1.2},
where $e_k\in H^1(\R^d)$, such that
\begin{equation}\label{e3.1}
C^2_\infty:=36\sum^\9_{k=1}\mu^2_k(|\na e_k|^2_{\9}+|e_k|^2_\9+1)<\9,\end{equation}and $\{e_k\}$ is an orthonormal basis in $H^{-1}(\R^d)$.
\eit
We insert the factor $36$ for convenience here to avoid additional large numerical constants in subsequent estimates.

\begin{remark} \label{r3.0} {\rm By Lemma \ref{l4.3a} below, $|\nabla e_k|_\infty$ in \eqref{e3.1} can be replaced by $|\nabla e_k|_d$, and all the results in this section remain true.}\end{remark}

\begin{definition}\label{d3.1} {\rm Let $x\in H\1(\R^d)$. A continuous, $(\calf_t)_{t\ge0}$-adapted process $X:[0,T] \to H\1(\R^d)$  is called  strong solution  to \eqref{e1.1} if the \fwg\ conditions hold:
\begin{eqnarray}
&&X\in L^2(\ooo;C([0,T];H\1(\R^d)))\cap L^2([0,T]\times\Omega;L^2(\R^d))\label{e3.2}\\
&&\int^\bullet_0\psi(X(s))ds\in C([0,T]; H^1(\R^d)),\ \pas\label{e3.3}\\
&&X(t)-\Delta\int^t_0\psi(X(s))ds=x+\int^t_0X(s)dW(s), \  \ff t\in[0,T], \ \pas \label{e3.4} 
\end{eqnarray}}\end{definition}

\begin{remark}\label{r3.1prim} {\rm The stochastic (It\^o-) integral in \eqref{e3.4} is the standard one from \cite{dpz} or \cite{rockpre}. In fact, in the terminology of these references, $W$ is a $Q$-Wiener process $W^Q$ on $H^{-1}$, where $Q:H^{-1}\to H^{-1}$ is the symmetric trace class operator defined by
$$Qh:=\sum^\9_{k=1}\mu^2_k\<e_k,h\>_{-1}e_k,\ h\in H\1.$$
For $x\in H\1$, define $\sigma(x):Q^{1/2}H\1\to H\1$ by
\begin{equation}\label{e3.4prim}
\sigma(x)(Q^{1/2}h)=\sum^\9_{k=1}(\mu_k\<e_k,h\>_{-1}e_k\cdot x),\ h\in H.\end{equation}
By \eqref{e3.1}, each $e_k$ is an $H\1$-multiplier such that
\begin{equation}\label{e3.4secund}
|e_k\cdot x|_{-1}\le2\(|e_k|_\9+|\nabla e_k|_\9\)|x|_{-1},\ x\in H\1. \end{equation}
Hence, for all $x\in H\1,\ h\in H\1,$
$$\barr{ll}
\dd\sum^\9_{k=1}\left|\mu_k\<e_k,h\>_{-1}e_k x\right|_{-1}
&\le\(\dd\sum^\9_{k=1}\mu^2_k|e_k x|^2_{-1}\)^{1/2}|h|_{-1}\vsp
&\le2C_\9|x|_{-1}|h|_{-1}\vsp
&=2C_\9|x|_{-1}|Q^{1/2}h|_{Q^{1/2}H\1},\earr$$
and thus $\sigma(x)$ is well-defined and an element in $L(Q^{1/2}H\1,H\1)$. Moreover, for $x\in H\1$, by \eqref{e3.4prim}, \eqref{e3.4secund},
\begin{equation}\label{e3.4tert}
\barr{ll}
\|\sigma(x)\|^2_{L_2(Q^{1/2}H\1,H\1)}
&=\dd\sum^\9_{k=1}|\sigma(x)(Q^{1/2}e_k)|^2_{-1}
=\dd\sum^\9_{k=1}|\mu_ke_kx|^2_{-1}\vsp
&=\dd\sum^\9_{k=1}\mu^2_k|e_kx|^2_{-1}
\le C^2_\9|x|^2_{-1}.\earr\end{equation}
Since $\{Q^{1/2}e_k\mid k\in\mathbb{N}\}$ is an orthonormal basis of $Q^{1/2}H\1$, it follows that $\sigma(x)\in L_2(Q^{1/2}H\1,H\1)$ and the map $x\mapsto\sigma(x)$ is linear and continuous (hence Lipschitz) from $H\1$ to $L_2(Q^{1/2}H\1,H\1)$. Hence (e.g., according to \cite[Section~2.3]{rockpre})
$$\int^t_0X(s)dW(s):=
\int^t_0\sigma(X(s))dW^Q(s),\ t\in[0,T],$$is well-defined as a continuous $H\1$-valued martingale and by It\^o's isometry and \eqref{e3.4tert}
\begin{equation}\label{e3.7prim}
 \barr{lcl}
 \E\left|\dd\int^t_0 X(s)dW(s)\right|^2_{-1}&
 =&\dd\sum^\9_{k=1}\mu^2_k\E
 \int^t_0|X(s)e_k|^2_{-1}ds\\
 &\le & C^2_\9\E\dd\int^t_0\!|X(s)|^2_{-1}ds,\  t\in[0,T].\earr\end{equation}
 Furthermore,  it follows that
\begin{equation}\label{e3.7secund}
\barr{lcl}
\dd\int^t_0X(s)dW(s)&=&
\dd\sum^\9_{k=1}\int^t_0\sigma(X(s))(Q^{1/2}e_k)
d\beta_k(s)\vsp
&=&\dd\sum^\9_{k=1}\int^t_0\mu_ke_kX(s)d\beta_k(s),\ t\in[0,T],\earr\end{equation}
where the series converges in $L^2(\ooo;C([0,T];H\1)).$

In fact, $\int^\bullet_0X(s)dW(s)$ is a continuous $L^2$-valued martingale, because  $X\in L^2([0,T]\times\Omega;L^2(\R^d))$ and, analogously to \eqref{e3.4tert}, we get
$$\|\sigma(x)\|^2_{L_2(Q^{1/2}H\1,L^2)}\le C^2_\9|x|^2_2,\ \ x\in L^2(\R^d).$$
In particular, by It\^o's isometry,
$$\E\left|\int^t_0 X(s)dW(s)\right|^2_2\le C^2_\9\E\dd\int^t_0|X(s)|^2_2ds,\ t\in[0,T].$$
Furthermore, the series in \eqref{e3.7prim} even converges in $L^2(\Omega;C([0,T];L^2(\R^d))).$

We shall use the facts presented in this remark throughout this paper without further notice.}\end{remark}

\begin{theorem}\label{t3.1} Let $d\ge1$ and $x\in L^2(\R^d)$. Then, under assumptions {\rm(i), (ii)}, there is a unique strong solution  to equation \eqref{e1.1}. This solution satisfies
$$\E\left[\sup_{t\in[0,T]}|X(t)|^2_2\right]
\le 2|x|^2_2e^{3C^2_\infty t}.$$In particular, $X\in L^2(\Omega;L^\infty([0,T];L^2(\R^d)))$. Assume further that
\begin{equation}\label{e3.5}
\psi(r)r\ge\a r^2,\ \ff r\in\R,\end{equation}where
$\a>0$. Then, there is a unique strong solution $X$ to \eqref{e1.1} for all $x\in H\1(\R^d).$\end{theorem}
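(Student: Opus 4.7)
The plan is to cast \eqref{e1.1} as an abstract stochastic evolution equation in $H^{-1}(\R^d)$ on the Gelfand triple $L^2(\R^d)\subset H^{-1}(\R^d)\subset (L^2(\R^d))^*$, with drift $A(u)=\Delta\psi(u)$ and multiplicative noise coefficient $\sigma$ as in Remark \ref{r3.1prim}. Under (i), $A$ is hemicontinuous, monotone and of linear growth from $L^2$ to $(L^2)^*$, while Remark \ref{r3.1prim} shows that $\sigma$ is linear and Lipschitz on both $H^{-1}$ and $L^2$. For uniqueness I would apply It\^o's formula to $t\mapsto|X_1(t)-X_2(t)|_{-1}^2$ for two strong solutions sharing the same initial datum: by monotonicity of $\psi$ the drift contributes $-2\langle\psi(X_1)-\psi(X_2),X_1-X_2\rangle_{L^2}\le 0$, while by \eqref{e3.4tert} the It\^o correction is dominated by $C_\infty^2|X_1-X_2|_{-1}^2$. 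Taking expectation and applying Gronwall then forces $X_1=X_2$.

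\textbf{Existence and the sup-estimate.} Under (i) alone $A$ is only non-positive (not coercive) on $L^2$, so I would regularize by $\psi_\lambda(r):=\psi(r)+\lambda r$, $\lambda\in(0,1]$, which is Lipschitz and $\lambda$-strongly monotone, rendering $A_\lambda=\Delta\psi_\lambda$ coercive on the Gelfand triple. The variational theorem of Krylov--Rozovskii (cf.\ \cite{rockpre}) then yields a unique strong solution $X_\lambda$ for every $x\in L^2$. The pivotal a priori estimate comes from applying It\^o to $|X_\lambda|_2^2$, which is legitimate by the last part of Remark \ref{r3.1prim}: the drift term equals $-2\int\psi_\lambda'(X_\lambda)|\nabla X_\lambda|^2\,dx\le 0$ and is discarded, while the It\^o correction is bounded by $(C_\infty^2/36)|X_\lambda|_2^2$ thanks to \eqref{e3.1}. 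Gronwall on expectation gives the $L^\infty_t L^2_x$ bound, and Burkholder--Davis--Gundy on the martingale part, combined with a Young inequality to absorb the resulting supremum into the left-hand side, produces the stated sup-estimate with the constants $2$ and $3C_\infty^2$, uniformly in $\lambda$. A weakly-$*$ convergent subsequence yields a candidate limit $X$ in $L^2(\Omega;L^\infty(0,T;L^2))$, together with a weak $L^2$-limit $\eta$ of $\psi_\lambda(X_\lambda)$. The main technical difficulty is identifying $\eta=\psi(X)$: since no Rellich-type compactness is available on $\R^d$, I would invoke the Minty--Browder trick, passing to the weak limit in the energy identity for $|X_\lambda|_{-1}^2$ (the martingale vanishing in expectation and the It\^o correction passing by weak lower semicontinuity) to obtain
\[
\E\int_0^T\!\!\int_{\R^d}(\eta-\psi(Y))(X-Y)\,dx\,dt\ge 0
\]
for every bounded adapted $L^2$-test process $Y$, whence $\eta=\psi(X)$. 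Passing to the limit in the stochastic equation itself, using the linearity of $\sigma$, concludes the construction.

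\textbf{Part (b).} Under the coercivity \eqref{e3.5}, $A$ itself satisfies $\langle A(u),u\rangle\le-\alpha|u|_2^2$, so the variational scheme applies directly without any regularization and yields a unique strong solution for every $x\in L^2$. For general $x\in H^{-1}$, I would approximate $x$ by $x_n\in L^2$ with $x_n\to x$ in $H^{-1}$; applying It\^o to $|X_n|_{-1}^2$ and exploiting coercivity gives the uniform bound
\[
\E\int_0^T|X_n|_2^2\,dt\le C(|x|_{-1}^2+1),
\]
while the $H^{-1}$-contraction from the uniqueness step, strengthened to a supremum bound by BDG, shows that $(X_n)$ is Cauchy in $L^2(\Omega;C([0,T];H^{-1}))$. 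Its limit is the desired strong solution, and uniqueness is inherited from the contraction estimate.
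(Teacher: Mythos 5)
Your overall architecture (variational framework on a Gelfand triple with $V=L^2(\R^d)$, regularization $\psi+\lambda I$, an a priori $L^2$ bound, passage to the limit) is the same as the paper's, but there are two genuine gaps, and they sit exactly where the paper does its real work. The first is the pivotal $L^2$ estimate. You assert that applying It\^o's formula to $|X_\lambda|_2^2$ is ``legitimate by the last part of Remark \ref{r3.1prim}'' and that the drift contributes $-2\int\psi_\lambda'(X_\lambda)|\nabla X_\lambda|^2$. Remark \ref{r3.1prim} only makes the stochastic integral an $L^2$-valued martingale; it says nothing about the drift term, which is the actual obstruction: the variational solution is only known to lie in $L^2([0,T]\times\Omega;L^2)$ with continuous paths in $H^{-1}$, the drift $\Delta\psi_\lambda(X_\lambda)$ lives in $H^{-2}$, and the pairing $\langle\Delta\psi_\lambda(X_\lambda),X_\lambda\rangle_2$ (and your integration by parts) presupposes $X_\lambda,\psi_\lambda(X_\lambda)\in H^1$, which is not known a priori --- indeed $\lambda\,\E\int_0^T|\nabla X_\lambda|_2^2\,ds<\infty$ is part of the conclusion, not a hypothesis. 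The paper's Claim 1 is devoted precisely to making this rigorous: It\^o is applied to the mollified functional $|(\alpha-\Delta)^{-1/2}X_\lambda^\nu|_2^2$, the drift contribution is shown to be $\le 0$ by rewriting it through the Green kernel of $\alpha-\Delta$ and using monotonicity of $\psi$ together with the sub-Markov property $P1\le 1$ (via \cite[Lemma 5.1]{15prim}), and only then is $\alpha\to\infty$ taken by monotone convergence. Without this (or an equivalent regularization), your central estimate is a formal computation.

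The second gap is the repeated use of the identity $\langle\Delta g,v\rangle_{-1}=-\langle g,v\rangle_{L^2}$ (in your uniqueness step, in the claimed monotonicity of $A=\Delta\psi$ on the triple, and in the coercivity $\langle A(u),u\rangle\le-\alpha|u|_2^2$ for part (b)). For the inhomogeneous inner product of $H^{-1}(\R^d)$, whose Riesz map is $(1-\Delta)^{-1}$ and not $(-\Delta)^{-1}$, this identity is false: there is a zero-order correction $\langle(1-\Delta)^{-1}(\psi(X_1)-\psi(X_2)),X_1-X_2\rangle_2$, of size $|\psi(X_1)-\psi(X_2)|_2\,|X_1-X_2|_{-1}$, which monotonicity alone cannot absorb into $|X_1-X_2|_{-1}^2$; on $\R^d$ (in particular for $d\le 2$, and the theorem claims $d\ge1$) one cannot switch to the homogeneous norm. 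The paper circumvents this by inserting the extra approximation $dX+(\nu-\Delta)\psi(X)\,dt=X\,dW$ and working with the norms $|\cdot|_{-1,\nu}=\langle(\nu-\Delta)^{-1}\cdot,\cdot\rangle^{1/2}$, for which the cancellation is exact, and then controls the errors in the limits $\lambda,\nu\to 0$ using the quantitative consequence of (i), $(\psi(r)-\psi(r'))(r-r')\ge(\mathrm{Lip}\,\psi+1)^{-1}|\psi(r)-\psi(r')|^2$ --- an inequality your argument never invokes. This inequality is also what lets the paper prove that $\psi(X_\lambda)$ is \emph{strongly} Cauchy in $L^2$, so that no Minty--Browder identification is needed; your Minty route is a legitimate alternative in principle, but as written the step ``the It\^o correction passes by weak lower semicontinuity'' has the wrong sign (the correction enters the energy identity with a plus sign) and needs the standard exponential-weight trick to close.
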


\n{\bf Proof of Theorem \ref{t3.1}.} We approximate \eqref{e1.1} by
\begin{equation}\label{e3.6}
\barr{l}
dX+(\nu-\Delta)\psi(X)dt=XdW(t),\ t\in(0,T),\\
X(0)=x\ \onr\ \R^d,\earr\end{equation}where $\nu\in(0,1)$. We have
the following.

\begin{lemma}\label{l3.2} Assume that $\psi$ is as in assumption {\rm(i)}.   Let $x\in L^2(\R^d)$. Then, there is a unique $(\calf_t)_{t\ge0}$-adapted solution $X=X^\nu$ to \eqref{e3.6} in the \fwg\ strong sense:
\begin{equation}
X^\nu\in L^2(\ooo,C([0,T];H\1(\R^d)))\cap L^2([0,T]\times\Omega;L^2(\R^d)), \label{e3.7}\end{equation}
and $\pas$
\begin{equation}
X^\nu(t)=x+(\Delta-\nu)\int^t_0\psi(X^\nu(s))ds
 +\int^t_0 X^\nu(s) dW(s),\label{e3.10}
 \ t\in[0,T].
\end{equation}
In addition, for all $\nu\in(0,1)$,
 \begin{equation}\label{e3.12prim}
 \E\left[\sup_{t\in[0,T]}|X^\nu(t)|^2_2\right]\le2|x|^2_2 e^{3C^2_\9T}.\end{equation}
 If, moreover, $\psi$ satisfies \eqref{e3.5}, then for each $x\in H\1(\R^d)$ there is a unique solution $X^\nu$ satisfying \eqref{e3.7}, \eqref{e3.10}. \end{lemma}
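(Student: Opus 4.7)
My plan is to unify both parts of the lemma around the Krylov--Rozovskii variational theorem, applied to the Gelfand triple $V:=L^2(\R^d)\subset H:=H^{-1}(\R^d)\subset V^*$. For $u\in V$ one interprets the drift $A(u):=-(\nu-\Delta)\psi(u)$ as the element of $V^*$ defined by
\begin{equation*}
\langle A(u),v\rangle_{V^*,V}=-\langle \psi(u),v\rangle_{2}+(1-\nu)\langle\psi(u),v\rangle_{-1},\qquad v\in V,
\end{equation*}
or equivalently, and more cleanly, one works on $H$ with the equivalent norm $|u|_{\nu,-1}^2:=\langle(\nu-\Delta)^{-1}u,u\rangle_{2}$, for which the pairing collapses to $\langle A(u),v\rangle_{V^*,V}=-\langle\psi(u),v\rangle_{2}$. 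In either formulation, $\|A(u)\|_{V^*}\le L\,|u|_2$ (since $\psi$ is Lipschitz with $\psi(0)=0$) and the Lipschitz noise bounds \eqref{e3.4secund}--\eqref{e3.4tert} of Remark~\ref{r3.1prim} apply.

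\textbf{Coercive case ($x\in H^{-1}$ under \eqref{e3.5}).} Here the four standard Krylov--Rozovskii hypotheses hold immediately: hemicontinuity follows from continuity of $\psi$ and dominated convergence; monotonicity $\langle A(u)-A(v),u-v\rangle_{V^*,V}\le0$ from monotonicity of $\psi$; growth from $\|A(u)\|_{V^*}\le L|u|_2$; and, combining \eqref{e3.5} with \eqref{e3.4tert},
\begin{equation*}
2\langle A(u),u\rangle_{V^*,V}+\|\sigma(u)\|^2_{L_2(Q^{1/2}H^{-1},H^{-1})}\le -2\alpha |u|_2^2+K |u|_{-1}^2,
\end{equation*}
which provides coercivity. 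The variational theorem then yields a unique $(\calf_t)$-adapted strong solution $X^\nu$ satisfying \eqref{e3.7}--\eqref{e3.10}.

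\textbf{General Lipschitz case ($x\in L^2$).} I regularise by $\psi_\alpha(r):=\psi(r)+\alpha r$, which is Lipschitz, monotone and satisfies \eqref{e3.5} with constant $\alpha>0$, and let $X^{\nu,\alpha}$ be the solution provided by the coercive case. Applying It\^o's formula to $|X^{\nu,\alpha}|_2^2$, the Marcus--Mizel chain rule for Lipschitz $\psi_\alpha$ gives
\begin{equation*}
\langle(\nu-\Delta)\psi_\alpha(u),u\rangle_{2}=\nu\langle\psi_\alpha(u),u\rangle_2+\int_{\R^d}\psi_\alpha'(u)|\na u|^2\,d\xi\ge0,
\end{equation*}
so the drift is non-positive; together with the $L^2$-noise bound of Remark~\ref{r3.1prim} and a BDG/Gronwall argument (the factor $36$ in \eqref{e3.1} being calibrated precisely to absorb the BDG constant), this delivers \eqref{e3.12prim} uniformly in $\alpha>0$. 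Applying It\^o next to $|X^{\nu,\alpha}-X^{\nu,\alpha'}|_{\nu,-1}^2$, the $\psi$-part of the drift is non-positive by monotonicity of $\psi$, while the residual term $\langle\alpha X^{\nu,\alpha}-\alpha'X^{\nu,\alpha'},X^{\nu,\alpha}-X^{\nu,\alpha'}\rangle_{2}$, decomposed as $\alpha|X^{\nu,\alpha}-X^{\nu,\alpha'}|_2^2+(\alpha-\alpha')\langle X^{\nu,\alpha'},X^{\nu,\alpha}-X^{\nu,\alpha'}\rangle_2$, is controlled by $|\alpha-\alpha'|$ times the uniform $L^2$-bound via Cauchy--Schwarz. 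BDG and Gronwall then give $\E\sup_{t\le T}|X^{\nu,\alpha}(t)-X^{\nu,\alpha'}(t)|_{-1}^2\le C|\alpha-\alpha'|$, so $\{X^{\nu,\alpha}\}$ is Cauchy in $L^2(\Omega;C([0,T];H^{-1}))$; the limit $X^\nu$ inherits \eqref{e3.7} from the uniform bound \eqref{e3.12prim}, and uniqueness follows from the same It\^o argument applied to the difference of two candidate solutions.

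The principal obstacle is the passage to the limit in the nonlinear term $\psi_\alpha(X^{\nu,\alpha})$ in the integrated equation \eqref{e3.10}: the convergence $X^{\nu,\alpha}\to X^\nu$ is strong only in $H^{-1}$, whereas $\psi$ is a pointwise nonlinearity acting on $L^2$-functions. I expect to close this step by combining the strong $H^{-1}$-convergence (which yields, along a subsequence, convergence in measure on $\Omega\times[0,T]\times\R^d$) with the uniform $L^2$-bound \eqref{e3.12prim} (providing the equi-integrability required for a Vitali-type argument), so that $\psi_\alpha(X^{\nu,\alpha})\to\psi(X^\nu)$ in $L^1_{\mathrm{loc}}$ and hence weakly in $L^2$; a Minty-type monotonicity argument, exploiting strict monotonicity of $\psi_\alpha$ for $\alpha>0$, would provide an alternative route in case the equi-integrability argument proves fragile.
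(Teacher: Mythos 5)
Your overall architecture coincides with the paper's: the coercive case via the variational (Krylov--Rozovskii) framework on $V=L^2(\R^d)\subset H=H^{-1}_\nu(\R^d)$, then regularisation by $\psi+\alpha I$, a uniform $L^2$ bound, a Cauchy estimate in $H^{-1}$, and a limit passage. However, there are two genuine gaps. First, your derivation of the uniform bound \eqref{e3.12prim} applies It\^o's formula directly to $|X^{\nu,\alpha}|_2^2$ and invokes the chain rule $\langle(\nu-\Delta)\psi_\alpha(u),u\rangle_2=\nu\langle\psi_\alpha(u),u\rangle_2+\int\psi_\alpha'(u)|\nabla u|^2\,d\xi$. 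This presupposes $X^{\nu,\alpha}(t)\in H^1(\R^d)$, which is not known a priori: the variational solution only gives $X^{\nu,\alpha}\in L^2([0,T]\times\Omega;L^2)$, so the drift $(\Delta-\nu)\psi_\alpha(X^{\nu,\alpha})$ lives in $H^{-2}$ and the pairing with $X^{\nu,\alpha}$ is not defined. The paper avoids exactly this circularity by applying It\^o to $|(\a-\Delta)^{-1/2}X^\nu_\lbb|_2^2$ (see \eqref{e3.17z}), proving nonpositivity of the drift term through a Green-kernel identity for $P=(\a-\nu)(\a-\Delta)^{-1}$ together with $P1\le 1$, and only then letting $\a\to\infty$ by monotone convergence. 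Your step is the formal version of this and needs the same (or an equivalent) regularisation to be rigorous.

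Second, and more seriously, your primary route for passing to the limit in the nonlinear term fails: strong convergence $X^{\nu,\alpha}\to X^\nu$ in $L^2(\Omega;C([0,T];H^{-1}))$ does \emph{not} yield convergence in measure on $\Omega\times[0,T]\times\R^d$. Convergence in $H^{-1}(\R^d)$ is compatible with wild spatial oscillation (e.g.\ $u_n(\xi)=\sin(n\xi_1)\chi(\xi)\to0$ in $H^{-1}$ while staying bounded away from $0$ in measure), so the Vitali argument cannot start. The paper's resolution is precisely the term you discard: in the It\^o estimate for $|X^\nu_\lbb-X^\nu_{\lbb'}|^2_{-1,\nu}$ one does \emph{not} simply drop the $\psi$-part by monotonicity, but keeps it and uses the Lipschitz-monotonicity inequality $(\psi(r)-\psi(r'))(r-r')\ge(\mathrm{Lip}\,\psi+1)^{-1}|\psi(r)-\psi(r')|^2$ to obtain, in \eqref{e3.22z}, that $\psi(X^\nu_\lbb)$ is Cauchy, hence strongly convergent, in $L^2(\Omega\times[0,T]\times\R^d)$; the identification of its limit with $\psi(X^\nu)$ is then immediate (strong--weak pairing plus Minty). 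Your fallback Minty argument could in principle be completed, but it requires the energy $\limsup$ inequality, which you have not supplied, and ``strict monotonicity of $\psi_\alpha$'' is not the relevant ingredient. As written, the limit passage in \eqref{e3.10} is not closed.
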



\n{\bf Proof of Lemma \ref{l3.2}.} Let us start with the second part of the assertion, i.e., we assume that $\psi$ satisfies \eqref{e3.5} and that $x\in H\1(\R^d)$. Then the standard theory  (see, e.g., \cite[Sections 4.1 and 4.2]{rockpre}) applies to ensure that there exists a unique solution $X^\nu$ taking value in $H\1(\R^d)$ satisfying \eqref{e3.7}, \eqref{e3.10} above. Indeed, it is easy to check that (H1)--(H4) from \cite[Section 4.1]{rockpre} are satisfied with $V:=L^2(\R^d)$, $H:=H\1(\R^d)$, $Au:=(\Delta-\nu)(\psi(u)),$ $u\in V$, and $H\1(\R^d)$ is equipped with the equivalent norm
$$|\eta|_{-1,\nu}:=\<\eta,(\nu-\Delta)\1\eta\>^{1/2},\ \eta\in H\1(\R^d),$$
(in which case, we also write $H\1_\nu)$. Here, as before, we use $\left<\cdot,\cdot\right>$ also to denote the dualization between $H^1(\R^d)$ and $H\1(\R^d)$. For details, we refer to the calculations in \cite[Example 4.1.11]{rockpre}, which  because $p=2$ go through when the bounded domain $\Omega$ there is replaced by $\R^d$. Hence \cite[Theorem 4.2.4]{rockpre} applies to give the above solution $X^\nu$.

In the case when $\psi$ does not satisfy \eqref{e3.5}, the above conditions (H1), (H2), (H4) from \cite{rockpre} still hold, but (H3) not in general.  Therefore, we replace $\psi$ by $\psi+\lambda I$, $\lambda\in(0,1)$, and thus consider $A_\lbb(u):=(\Delta-\nu)(\psi(u)+\lbb u)$, $u\in V:=L^2(\R^d)$ and, as above, by \cite[Theorem 4.2.4]{rockpre}, obtain a solution $X^\nu_\lbb$, satisfying \eqref{e3.7}, \eqref{e3.10}, to
\begin{equation}\label{e3.13z}
\!\barr{l}
dX^\nu_\lbb(t)+(\nu-\Delta)
(\psi(X^\nu_\lbb(t))+\lbb X^\nu_\lbb(t))dt
=X^\nu_\lbb(t)dW(t),\ t\in[0,T],\vsp
X^\nu_\lbb(0)=x\in H\1(\R^d).
\earr\!
\end{equation}In particular, by \eqref{e3.7},
\begin{equation}\label{e3.14z}
\E\left[\sup_{t\in[0,T]}
|X^\nu_\lbb(t)|^2_{-1}\right]<\9.\end{equation}
We want to let $\lbb\to0$ to obtain a solution to \eqref{e3.6}.  To  this end, in this case (i.e., without assuming \eqref{e3.5}), we     assume from now on that   $x\in L^2(\R^d)$.   The reason is that we need the following.

\bk\n{\bf Claim 1.} {\it We have $X^\nu_\lbb\in L^2([0,T]\times\Omega;H^1(\R^d))$ and $$\barr{r}
\E\left[\dd\sup_{t\in[0,T]}|X^\nu_\lbb(t)|^2_2\right]+4\lbb\E\dd\int^T_0|\na X^\nu_\lbb(s)|^2_2ds\le2|x|^2_2
e^{3C^2_\9T},\vsp\mbox{ for all $\nu,\lbb\in(0,1).$}\earr$$}

\n Furthermore, $X^{\nu}_\lbb$ has continuous sample paths in $L^2(\R^d)$, $\pas$

\bk\n{\bf Proof of Claim 1.} We know that
\begin{equation}\label{e3.15z}
X^\nu_\lbb(t)=x+(\Delta-\nu)\dd\int^t_0(\psi(X^\nu_\lbb(s))
+\lbb X^\nu_\lbb(s))ds 
+
\dd\int^t_0X^\nu_\lbb(s)dW(s),\ t\in[0,T].
\end{equation}
Let $\a\in(\nu,\9)$. Recalling that $(\a-\Delta)^{-\frac12}:H\1(\R^d)\to L^2(\R^d)$ and applying this operator to the above equation, we find
\begin{equation}\label{e3.16z}
\barr{lcl}
(\a-\Delta)^{-\frac12}X^\nu_\lbb(t)\vsp
=(\a-\Delta)^{-\frac12}x+\dd\int^t_0(\Delta-\nu)(\a-\Delta)^{-\frac12}
(\psi(X^\nu_\lbb(s))+\lbb X^\nu_\lbb(s))ds\vsp
+\dd\int^t_0(\a-\Delta)^{-\frac12}\sigma(X^\nu_\lbb(s))
Q^{1/2}dW(s),\ t\in[0,T].\earr
\end{equation}
Applying It\^o's formula (see, e.g., \cite[Theorem 4.2.5]{rockpre} with $H=L^2(\R^d)$) to $|(\a-\Delta)^{-\frac12}X^\nu_\lbb(t)|^2_2$, we obtain, for $t\in[0,T]$,

\begin{equation}\label{e3.17z}
\barr{ll}
|(\a-\Delta)^{-\frac12} X^\nu_\lbb(t)|^2_2
=|(\a-\Delta)^{-\frac12}x|^2_2\vsp
+2\dd\int^t_0\<(\Delta-\nu)
(\alpha-\Delta)^{-\frac12}\psi(X^\nu_\lbb(s)),
(\a-\Delta)^{-\frac12}X^\nu_\lbb(s)\>ds\vsp
-2\lbb\dd\int^t_0(|\na(
(\a-\Delta)^{-\frac12}X^\nu_\lbb(s))|^2_2+
\nu|(\a-\Delta)^{-\frac12}X^\nu_\lbb(s)|^2_2)ds\vsp
+\dd\int^t_0\|(\a-\Delta)^{-\frac12}\sigma(X^\nu_\lbb(s))
Q^{1/2}\|^2_{L_2(H\1,L^2)}ds\vsp
+2\dd\int^t_0\<(\a-\Delta)^{-\frac12}X^\nu_\lbb(s),
(\a-\Delta)^{-\frac12}
\sigma(X^\nu_\lbb(s))Q^{1/2}dW(s)\>_2.
\earr\end{equation}But, for $f\in L^2(\R^d)$, we have $$(\a-\Delta)^{-\frac12}(\Delta-\nu)(\a-\Delta)^{-\frac12}f=(P-I)f,$$where$$P:=(\a-\nu)(\a-\Delta)\1.$$
For the Green function $g_\a$ of $(\a-\Delta)$, we then have, for $f\in L^2(\R^d)$,
$$Pf=(\a-\nu)\int_{\R^d}f(\xi)g_\a(\cdot,\xi)d\xi.$$
Hence, by  \cite[Lemma 5.1]{15prim},  the integrand of the second term on the right-hand side of \eqref{e3.17z} with $f:=X^\nu_\lbb(s)$ $(\in L^2(\R^d)$ for $ds$-a.e. $s\in[0,T])$ can be rewritten as
$$\barr{lcl}
\<\psi(f),(P-I)f\>_2&\!\!\!=\!\!\!\!&
-\dd\frac12\int_{\R^d}\int_{\R^d}
[\psi(f(\wt\xi)){-}\psi(f(\xi))]
[f(\wt\xi){-}
f(\xi)]g_\a(\xi,\wt\xi)d\wt\xi\,d\xi\vsp
&&-\dd\int_{\R^d}(1-P1(\xi))\cdot\psi(f(\xi))f(\xi)d\xi.\earr$$
Since $\psi$ is monotone, $\psi(0)=0$ and $P1\le1$, we deduce that
$$\<\psi(f),(P-I)f\>\le 0.$$
Hence, after a multiplication by $\a$, \eqref{e3.17z} implies that, for all $t\in[0,T]$ (see~Remark \ref{r3.1prim}),

$$\barr{l}
\a|(\a-\Delta)^{-\frac12} X^\nu_\lbb(t)|^2_2 +
2\lbb\dd\int^t_0|\na(\sqrt\a
(\a-\Delta)^{-\frac12} X^\nu_\lbb(s))|^2_2ds\\
\le\a|(\a-\Delta)^{-\frac12}x|^2_2+\dd\int^t_0
\dd\sum^\9_{k=1}\mu^2_k
\<\a(\a-\Delta)^{-1}(e_kX^\nu_\lbb(s)),
e_kX^\nu_\lbb(s)\>_2ds\\
+2\dd\int^t_0\<\a(\a-\Delta)\1X^\nu_\lbb(s),\sigma(X^\nu_\lbb(s))
Q^{1/2}dW(s)\>_2.\earr$$
Hence, by the Burkholder--Davis--Gundy (BDG) inequality (with $p=1$) and since $\a(\a-\Delta)\1$ is a contraction on $L^2(\R^d)$,
\begin{equation}\label{e3.18z}
\barr{l}
\E\left[\dd\sup_{s\in[0,t]}|\sqrt\a(\a-\Delta)^{-\frac12}X^\nu_\lbb(s)|^2_2\right]
\\
\qquad\quad+2\lbb\E\dd\int^t_0
|\na(\sqrt\a(\a-\Delta)^{-\frac12}
X^\nu_\lbb(s))|^2_2ds\\
\qquad\quad\le|\sqrt\a (\a-\Delta)^{-\frac12}x|^2_2+C^2_\9
\E\dd\int^t_0|X^\nu_\lbb(s)|^2_2ds\\
\qquad\quad+6\E\(\dd\int^t_0\sum^\9_{k=1}\mu^2_k\<\a(\a-\Delta)\1X^\nu_\lbb(s),
e_kX^\nu_\lbb(s)\>^2_2ds\)^{1/2}.\earr\end{equation}
The latter term can be estimated by
\begin{equation}\label{e3.19z}
\barr{l}
C_\9\E\left[\dd\sup_{s\in[0,t]}|\a(\a-\Delta)\1X^\nu_\lbb(s)|_2
\(\dd\int^t_0|X^\nu_\lbb(s)|^2_2ds\)^{1/2}\right]\vsp
\le\dd\frac12\,\E\left\{\dd\sup_{s\in[0,t]}
|\sqrt\a(\a-\Delta)^{-\frac12}X^\nu_\lbb(s)|^2_2\right]
+\dd\frac12\,C^2_\9\E\dd\int^t_0|X^\nu_\lbb(s)|^2_2ds,\earr\end{equation}
where we used that $\sqrt\a(\a-\Delta)^{-\frac12}$
is a contraction on $L^2(\R^d)$.
Note that the first summand on the right-hand side is finite by \eqref{e3.14z}, since the norm $|\sqrt{\a}(\a-\Delta)^{-\frac12}\cdot|_2$ is equivalent to $|\cdot|_{-1}$. Hence, we can subtract this term after substituting \eqref{e3.19z} into \eqref{e3.18z} to obtain
\begin{equation}\label{e3.20z}
\barr{l}
\E\left[\dd\sup_{s\in[0,t]}|\sqrt\a
(\a-\Delta)^{-\frac12}X^\nu_\lbb(s)|^2_2\right]
\\\qquad\qquad
+4\lbb\E\dd\int^t_0
|\na(\sqrt\a(\a-\Delta)^{-\frac12}X^\nu_\lbb(s)
|^2_2ds\\
\qquad\qquad \le2|\sqrt\a(\a-\Delta)^{-\frac12}x|^2_2+3C^2_\9\E\dd\int^t_0|X^\nu_\lbb(s)|^2_2ds,\ t\in[0,T].\earr\end{equation}
Obviously, the quantity under the $\dd\sup_{s\in[0,t]}$ on the left-hand side of \eqref{e3.20z} is increasing in $\a$. So, by the monotone convergence theorem, we may let $\a\to\9$ in \eqref{e3.20z} and then, except for its last part, Claim 1 immediately follows by Gronwall's lemma, since $\sqrt{\a}(\a-\Delta)^{-\frac12}$ is a contraction in $L^2(\R^d)$ and  $x\in L^2(\R^d)$. The last part of Claim 1 then immediately follows from \mbox{\cite[Theorem 2.1]{13a}. \rule{1,5mm}{1,5mm}}\bk

Applying It\^o's formula to $|X^\nu_\lbb(t)-X^\nu_{\lbb'}(t)|^2_{-1,\nu}$ (see \cite[Theorem 4.2.5]{rockpre}), it follows from \eqref{e3.15z} that, for $\lbb,\lbb'\in(0,1)$ and $t\in[0,T]$,
\begin{equation}\label{e3.21z}
\barr{l}
|X^\nu_\lbb(t)-X^\nu_{\lbb'}(t)|^2_{-1,\nu}\vsp
\quad+2\dd\int^t_0\<\psi(X^\nu_\lbb)-\psi(X^\nu_{\lbb'})
+(\lbb X^\nu_\lbb-\lbb' X^\nu_{\lbb'}),
X^\nu_\lbb-X^\nu_{\lbb'}\>_2ds\vsp
\quad=\dd\int^t_0\|\sigma(X^\nu_\lbb(s)-X^\nu_{\lbb'}(s)\|^2_{L_2(Q^{1/2}H\1,H^{-1}_\nu)}ds\vsp
\quad+2\dd\int^t_0\<X^\nu_{\lbb}(s)-X^\nu_{\lbb'}(s),
\sigma(X^\nu_\lbb(s)-X^\nu_{\lbb'}(s))dW^Q(s)\>_{-1,\nu}.\earr\end{equation}
Our assumption (i) on $\psi$ implies that
$$(\psi(r)-\psi(r'))(r-r')\ge({\mathrm Lip}\,\psi+1)\1|\psi(r)-\psi(r')|^2,\mbox{ for $r,r'\in\R,$}$$
where ${\mathrm Lip}\,\psi$ is  the Lipschitz constant of $\psi.$ Hence \eqref{e3.21z}, \eqref{e3.4tert} and the BDG inequality (for $p=1$ imply that, for all $t\in[0,T]$)
$$\barr{l}
\E\left[\dd\sup_{s\in[0,t]}
|X^\nu_\lbb(s)-
X^\nu_{\lbb'}(s)|^2_{-1,\nu}\right]\vsp
 +2({\mathrm Lip}\,\psi+1)\1
\E\dd\int^t_0|\psi(X^\nu_\lbb(s))-
\psi(X^\nu_{\lbb'}(s))|^2_2ds\vsp
\le2(\lbb+\lbb')\E\dd\int^t_0
(|X^\nu_\lbb(s)|^2_2+|X^\nu_{\lbb'}(s)|^2_2)ds
+C^2_\9\dd\int^t_0
|X^\nu_\lbb(s)-X^\nu_{\lbb'}(s)|^2_{-1,\nu}ds\vsp
+2\E\(\dd\int^t_0\sum^\9_{k=1}\mu^2_k
\<X^\nu_\lbb(s)-X^\nu_{\lbb'}(s),
(X^\nu_\lbb(s)-X^\nu_{\lbb'}(s))
e_k\>^2_{-1,\nu}ds\)^{1/2}.\earr$$
By \eqref{e3.4tert} and Young's inequality, the latter term is  dominated by
$$\frac12\,
\E\left[\dd\sup_{s\in[0,t]}|X^\nu_\lbb(s)
-X^\nu_{\lbb'}(s)|^2_{-1,\nu}\right]
+
\dd\frac12\,C^2_\9\E\dd\int^t_0|X^\nu_
\lbb(s)-X^\nu_{\lbb'}(s)|^2_{-1,\nu}ds.$$
Hence, because of $x\in L^2(\R^d)$ and Claim 1, we may now apply Gronwall's lemma to obtain that, for some constant $C$ independent of $\lbb',\lbb$ (and $\nu$),
\begin{equation}\label{e3.22z}
\E\left[\dd\sup_{t\in[0,T]}|X^\nu_\lbb(t)-X^\nu_{\lbb'}(t)|^2_{-1,\nu}\right]
+
\E\dd\int^T_0|\psi(X^\nu_\lbb(s))-\psi(X^\nu_{\lbb'}(s))|^2_2ds\le C(\lbb+\lbb').
  \end{equation}
Hence there exists an $(\calf_t)$-adapted continuous $H\1$-valued process $X^\nu=(X^\nu(t))_{t\in[0,T]}$ such that $X^\nu\in L^2(\Omega;C([0,T];H\1))$. Now, by Claim 1, it follows  that
$$X^\nu\in L^2([0,T]\times\ooo;L^2(\R^d)).$$

\n{\bf Claim 2.} {\it $X^\nu$ satisfies equation \eqref{e3.10} $($i.e.,   we can pass to the limit in \eqref{e3.15z} as $\lbb\to0)$.}

\bk\n{\bf Proof of Claim 2.} We already know that
$$X^\nu_\lbb\longrightarrow X^\nu\mbox{\ \ and\ \ }
\int_0^\bullet X^\nu_\lbb(s)dW(s)\longrightarrow\int^\bullet_0X^\nu(s)dW(s)$$
in $L^2(\ooo;C([0,T];H\1))$ as $\lbb\to0$ (for the second convergence see the above argument using \eqref{e3.4tert} and the BDG inequality). So, by \eqref{e3.15z} it follows that
$$\int^\bullet_0(\psi(X^\nu_\lbb(s))+\lbb X^\nu_\lbb(s)))ds,\ \lbb>0,$$
converges as $\lbb\to0$ to an element in $L^2(\ooo;C([0,T];H^1).$ But, by \eqref{e3.22z} and Claim 1, it follows that
\begin{equation}\label{e3.23z}
\int^\bullet_0(\psi(X^\nu_\lbb(s))+\lbb X^\nu_\lbb(s))ds\longrightarrow\int^\bullet_0\psi(X^\nu(s))ds\end{equation}
as $\lbb\to0$ in $L^2(\ooo;L^2([0,T];L^2(\R^d)))$. Hence Claim 2 is proved. $\Box$\bk

Now, \eqref{e3.12prim}  follows from Claim 1 by lower semicontinuity. This completes the proof of Lemma \ref{l3.2}. $\Box$


\bk\n{\bf Proof of Theorem \ref{t3.1} (continued).} We are going to use Lemma \ref{l3.2} and let $\nu\to0$. The arguments are similar to those in the proof of Lemma \ref{l3.2}. So, we shall not repeat all the details.

Now, we rewrite \eqref{e3.6} as
\begin{equation} \label{e319prime}
dX^\nu+(I-\Delta)\psi(X^\nu)dt=(1-\nu)\psi(X^\nu)dt+X^\nu dW(t)
\end{equation}
and apply It\^o's formula to $\vf(x)=\frac12\,|x|^2_{-1}$ (see, e.g., \cite[Theorem 4.2.5]{rockpre}). We~get, for $x\in H^{-1}$, by \eqref{e3.7prim} and after taking expectation,

$$\barr{l}
\dd\frac12\,\E|X^\nu(t)|^2_{-1}+
\E\int^t_0\int_{\R^d}
\psi(X^\nu(s))X^\nu(s)d\xi\,ds\vsp
\qquad=\dd\frac12\,|x|^2_{-1}+(1-\nu)
\E\dd\int^t_0
 \<\psi(X^\nu(s)),X^\nu(s)\>_{-1}ds
 \vsp
\qquad+\dd\frac12\,\E\int^t_0 \sum^\9_{k=1}\mu^2_k|X^\nu e_k|^2_{-1}ds\vsp
\qquad\le\dd\frac12\,|x|^2_{-1}+\E \int^t_0 |\psi(X^\nu)|_{-1}|X^\nu|_{-1}ds
\vsp
\qquad+\dd\frac12C^2_\9\E \dd\int^t_0 |X^\nu(s)|^2_{-1}ds,\ \ff t\in[0,T].
\earr$$

Recalling that $|\cdot|_{-1}\le|\cdot|_2$,
  we get, via Young's and Gronwall's inequalities, for some $C\in(0,\9)$ that
\begin{equation}\label{e3.25z}
\E|X^\nu(t)|^2_{-1}+\frac\alpha2\,\E\int^T_0|X^\nu(s)|^2_2ds
\le C|x|^2_{-1},\ t\in[0,T],\ \nu\in(0,1),\end{equation}because, by assumption (i),  $\psi(r)r\ge\wt\a|\psi(r)|^2$, $\ff r\in\R$, with $\wt \a:=\mbox{$({\mathrm Lip}\,\psi+1)\1$}$. Here we set $\a=0$ if \eqref{e3.5} does not hold.

Now, by a similar calculus, for $X^\nu-X^{\nu'}$ we get
$$\barr{l}
|X^\nu(t)-X^{\nu'}(t)|^2_{-1}
+2\dd\int^t_0\int_{\R^d}
(\psi(X^\nu)-\psi(X^{\nu'}))(X^\nu-X^{\nu'})
d\xi\,ds\vsp
\quad\le C\dd\int^t_0\<\psi(X^\nu)-\psi(X^{\nu'}),
X^\nu-X^{\nu'}\>_{-1} ds \vsp
\quad+C\dd\int^t_0(\nu|\psi(X^\nu)|_2+\nu'|\psi(X^{\nu'})|_2)
|X^\nu-X^{\nu'}|_{-1}ds\vsp
\quad+C\dd\int^t_0|X^\nu-X^{\nu'}|^2_{-1}ds
+\dd \sum^\9_{k=1} \int_0^t \mu_k\<(X^\nu-X^{\nu'}),e_k(X^\nu-X^{\nu'})
\>_{-1}d\b_k,\vsp
\hfill t\in[0,T].\earr$$
Taking into account that, by assumption (i),  $$(\psi(x)-\psi(y))(x-y)\ge\wt\a|\psi(x)-\psi(y)|^2,\ \ff x,y \in \R^d,$$ we get, for all $\nu,\nu'>0,$
$$\barr{l}
\dd\vert X^\nu(t) - X^{\nu'}(t)\vert^2_{-1} + {\tilde \alpha}
 \int_0^t \vert \psi(X^\nu(s)) - \psi(X^{\nu'}(s))\vert^2_2 ds\vsp
 \dd\le  C_1  \int_0^t \vert X^\nu(s) - X^{\nu'}(s)\vert^2_{-1} ds +
\frac{\tilde \alpha}{2} \int_0^t \vert \psi(X^\nu(s)) -
 \psi(X^{\nu'}(s))\vert^2_2 ds \vsp
\dd+  C_2 (\nu +\nu')
 \dd\int^t_0(|\psi(X^\nu(s))|^2_2+
|\psi(X^{\nu'}(s))|^2_2)ds\vsp
\dd+  \sum_{k=1}^\infty  \int_0^t \mu_k
\langle (X^\nu(s) - X^{\nu'}(s)),e_k (X^\nu(s) - X^{\nu'}(s))\rangle_{-1} d \beta_k(s),\  t\in[0,T].
\earr$$
So, similarly to showing \eqref{e3.22z} in the proof of Lemma  \ref{l3.2}, by \eqref{e3.12prim}, if $x\in L^2(\R^d)$, and by \eqref{e3.25z}, if $x\in H\1(\R^d)$ and $\psi$ satisfies \eqref{e3.5}, by the Burkholder-Davis-Gundy  inequality, for $p=1$, we get, for all $\nu,\nu'\in(0,1)$,
$$
\E\sup_{t\in[0,T]}|X^\nu(t)-X^{\nu'}(t)|^2_{-1}
+\E\dd\int^T_0|\psi(X^\nu(s))-
\psi(X^{\nu'}(s))|^2_2ds
\le C(\nu+\nu').
$$
The remaining part of the proof is now exactly the same as the last part of the proof of Lemma \ref{l3.2}. $\Box$

\begin{remark}\label{r3.1} {\rm Theorem \ref{t3.1} is a basic tool for the probabilistic (double) re\-pre\-sentation of equation \eqref{e1.1}, which holds when $\psi$ is Lipschitz, as it is proved in \cite{9z}. If \eqref{e1.1} is not perturbed by noise, and $\psi$ is possibly discontinuous, 
its probabilistic representation was performed in  \cite{BRR1}, \cite{BRR2}, \cite{9z} with extensions and numerical simulations located in 
\cite{BR}, \cite{12z}.}\end{remark}

\section{Equation \eqref{e1.1} for maximal monotone  functions  $\psi$ with polynomial growth}

\label{s4}

\setcounter{equation}{0}

In this section, we assume $d\ge3$ and we shall study  the existence for equation \eqref{e1.1} under the \fwg\ assumptions:
\bit
\item[(j)] $\psi:\R\to2^\R$ is a maximal monotone graph such that $0\in\psi(0)$ and
\begin{equation}\label{e4.1}
\sup\{|\eta|;\ \eta\in\psi(r)\}\le C(1+|r|^m),\ \ff r\in\R,\end{equation}
where $1 \le m <\9$.
\item[(jj)] $W(t)=\dd\sum^\9_{k=1}\mu_k e_k\b_k(t),\ t\ge0$,
where $\{\b_k\}^\9_{k=1}$ are   independent Brownian motions on a stochastic basis
 $\{\ooo,\calf,\calf_t,\mathbb{P}\}$, $\mu_k\in\R$, and
$e_k\in C^1(\R^d)\cap \calh\1$ are such that $\{e_k\}$ is an  orthonormal basis in $\calh\1$ and
\begin{equation}\label{e4.2}
\sum^\9_{k=1}\mu^2_k(|e_k|^2_\9+|\na e_k|^2_d
 +1)<\9.\end{equation}
\eit
The existence of $\{e_k\}$ as in (jj) is ensured by the following lemma.

\begin{lemma}\label{l4.3a} Let $d\ge3$ and let $e\in L^\9(\R^d;\R^d)$ be such that $\na e\in L^{d}(\R^d;\R^d)$. Then
\begin{equation}\label{e4.12}
\|x e\|_{-1}\le\|x\|_{-1}(|e|_\9+C|\na e|_{d}),\ \ff x\in\calh\1,\end{equation}where $C$ is independent of $x$ and $e$.\end{lemma}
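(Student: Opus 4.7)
The natural approach is duality: by the definition of the $\calh^{-1}$-norm (equations (2.3)--(2.5)), one has
\[
\|xe\|_{-1} \;=\; \sup_{\varphi\in\calh,\ \|\varphi\|_1\le 1} \bigl|\langle\varphi,xe\rangle\bigr|,
\]
and formally $\langle\varphi,xe\rangle = \langle \varphi e, x\rangle$. The estimate will follow once we prove the multiplier bound
\begin{equation}\label{eq:mult}
\|\varphi e\|_{1} \;\le\; \bigl(|e|_\infty + C|\nabla e|_d\bigr)\|\varphi\|_{1}, \qquad \varphi\in\calh,
\end{equation}
since then $|\langle\varphi e,x\rangle|\le \|\varphi e\|_1\|x\|_{-1}$ and taking the supremum over $\varphi$ yields \eqref{e4.12}.

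To obtain \eqref{eq:mult}, I compute $\nabla(\varphi e)=(\nabla\varphi)e+\varphi\,\nabla e$ in the distributional sense, which is legitimate because each summand on the right will be shown to lie in $L^2(\R^d)$: the first is bounded in $L^2$ by $|e|_\infty|\nabla\varphi|_2$; for the second, I apply H\"older's inequality with exponents $\tfrac{2d}{d-2}$ and $d$ (noting $\tfrac{d-2}{2d}+\tfrac1d=\tfrac12$), getting
\[
|\varphi\,\nabla e|_2 \;\le\; |\varphi|_{\frac{2d}{d-2}}\,|\nabla e|_d.
\]
By the Sobolev embedding $\calh\subset L^{2d/(d-2)}(\R^d)$ (Lemma \ref{l1.1}, which crucially requires $d\ge3$), $|\varphi|_{2d/(d-2)}\le C|\nabla\varphi|_2$. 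Combining,
\[
\|\varphi e\|_1 = |\nabla(\varphi e)|_2 \le |e|_\infty|\nabla\varphi|_2 + C|\nabla e|_d\,|\nabla\varphi|_2 = \bigl(|e|_\infty+C|\nabla e|_d\bigr)\|\varphi\|_1,
\]
which is \eqref{eq:mult}. One then concludes as above.

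\textbf{Where the work sits.} The calculation itself is short; the delicate point is the product rule and the fact that $\varphi e$ is actually a tempered distribution belonging to $\calh$. Since $\varphi\in L^{2d/(d-2)}(\R^d)$ and $e\in L^\infty(\R^d)$, their pointwise product $\varphi e$ lies in $L^{2d/(d-2)}(\R^d)\subset\cals'(\R^d)$, so it is a tempered distribution and the gradient makes sense; the distributional Leibniz rule holds in this setting because both $\nabla\varphi\cdot e$ and $\varphi\cdot\nabla e$ are integrable on compact sets (in fact in $L^2$, as just shown), which legitimizes the identification $\nabla(\varphi e)=(\nabla\varphi)e+\varphi\nabla e$ and thus the conclusion $\varphi e\in\calh$. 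The $d\ge 3$ hypothesis enters only through the Sobolev embedding step, which is precisely the point noted in the introduction explaining why the case $d\le 2$ is open.
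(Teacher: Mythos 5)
Your argument is correct and is essentially identical to the paper's own proof: duality reduces the claim to the multiplier bound $\|e\varphi\|_1\le(|e|_\infty+C|\nabla e|_d)\|\varphi\|_1$, which is then obtained via the Leibniz rule, H\"older with exponents $\tfrac{2d}{d-2}$ and $d$, and the Sobolev embedding of Lemma \ref{l1.1}. The extra care you take with the distributional product rule only strengthens the write-up; nothing further is needed.
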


\begin{proof} We have
\begin{equation}\label{e4.13}
\|xe\|_{-1}=\sup\{\<x,e\vf\>;\|\vf\|_1\le1\}\le
\|x\|_{-1}\sup\{\|e\vf\|_1;\|\vf\|_1\le1\}.\end{equation}
On the other hand, by Lemma \ref{l1.1} we have,
for all $\vf\in C^\9_0(\R^d)$,
$$\barr{ll}
\|e\vf\|_{1}\!\!\!&\le |e\na\vf+\vf\na e|_2\le|e\na\vf|_2+|\vf\na e|_2\vsp
&\le |e|_\9|\na\vf|_2+|\vf|_{p}|\na e|_{d}\le|e|_\9\|\vf\|_1+C\|\vf\|_1|\na e|_{d},\earr$$where $p=\frac{2d}{d-2}\,\cdot$ Then, by \eqref{e4.13},   \eqref{e4.12} follows, as~claimed.~\end{proof}

\begin{remark}\label{r4.2z} \rm
\bit\item[(i)]It should be mentioned that, for $d=2$, Lemma \ref{l4.3a} fails~and this is the main reason our treatment of equation \eqref{e1.1} under as\-sump\-tions (j), (jj) is constrained to $d\ge3$.

\item[(ii)] We note that Remark \ref{r3.1prim} with the r\^ole of $H\1(\R^d)$ replaced by $\calh\1$ remains true in all its parts under condition (jj) above. We shall use this below without further notice.\eit\end{remark}
We denote by $j:\R\to\R$ the potential associated with $\psi$, that is, a continuous convex function on $\R$ such that $\pp j=\psi$, i.e.,
$$j(r)\le\zeta(r-\ov r)+j(\ov r),\ \ff\zeta\in\psi(r),\ r,\bar r\in\R.$$

\begin{definition}\label{d4.1} {\rm Let $x\in\calh\1$ and $p:=\max(2,2m)$.  An $\calh\1$-valued adapted process $X=X(t)$ is called strong solution  to \eqref{e1.1} if the \fwg\ conditions hold:
\begin{eqnarray}
&&X\mbox{ is $\calh\1$-valued continuous on $[0,T]$}, \pas,\label{e4.3}\\[2mm]
&&X\in L^p(\ooo\times(0,T)\times\R^d).\label{e4.4}
\end{eqnarray}There is $\eta\in L^{\frac pm}(\ooo\times(0,T)\times\R^d)$ such that
\begin{eqnarray}
&&\eta\in\psi(X),\ dt\otimes\mathbb{P}\otimes d\xi\ \mbox{ -- a.e. on }(0,T)\times\ooo\times\R^d\label{e4.5}
\end{eqnarray}and $\pas$
\begin{eqnarray}
&&X(t)=x+\Delta\dd\int^t_0\eta(s)ds+\sum^\9_{k=1}\mu_k\int^t_0X(s)e_kd\b_k(s)\label{e4.6}\\
&&\qquad\qquad\qquad\qquad\qquad\mbox{in }\mathcal{D}'(\R^d),\ \ t\in[0,T].\nonumber
\end{eqnarray}Here $\mathcal{D}'(\R^d)$ is the standard space of distributions on $\R^d$.}\end{definition}

Theorem \ref{t4.2} below is the main existence result for equation \eqref{e1.1}.

\begin{theorem}\label{t4.2} Assume that $d\ge3$ and that 
$$x\in L^{p}(\R^d)\cap L^2(\R^d)\cap\calh\1, \quad p:=\max(2,2m).$$ Then, under assumptions
 {\rm(j), (jj)}, there is a unique solution $X$ to \eqref{e1.1} such that
\begin{equation}\label{e4.7}
X\in L^2(\ooo;C([0,T];\calh^{-1})).
\end{equation}
Moreover, if $x\ge0,$ a.e. in $\R^d$, then $X\ge0$, a.e. on $(0,T)\times\R^d\times\ooo$.\end{theorem}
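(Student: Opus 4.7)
My plan is to approximate $\psi$ by its Yosida regularization $\psi_\lambda=(1/\lambda)(I-(I+\lambda\psi)^{-1})$ together with a viscous perturbation $\nu I$, $\lambda,\nu\in(0,1)$, producing a monotone Lipschitz strictly increasing coefficient $\psi_\lambda+\nu I$ for which the variational/It\^o machinery of Section~3 can be imitated, but in $\calh^{-1}$ rather than in $H^{-1}(\R^d)$. Theorem~\ref{t3.1} does not apply verbatim, since its noise is driven by an $H^{-1}$-basis, whereas here $\{e_k\}$ is an orthonormal basis of $\calh^{-1}$; however, the whole construction from \eqref{e3.4prim}--\eqref{e3.7secund} goes through once the multiplier estimate \eqref{e3.4secund} is replaced by Lemma~\ref{l4.3a}. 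Together with \eqref{e4.2}, this yields $\|\sigma(x)\|^2_{L_2(Q^{1/2}\calh^{-1},\calh^{-1})}\le C_\infty^2\|x\|_{-1}^2$ with $C_\infty^2:=\sum_k\mu_k^2(|e_k|_\infty+C|\nabla e_k|_d)^2<\infty$, and produces, for each $(\lambda,\nu)$, an $(\calf_t)$-adapted $\calh^{-1}$-continuous process $X_{\lambda,\nu}$ solving $dX_{\lambda,\nu}+(\nu-\Delta)(\psi_\lambda+\nu I)(X_{\lambda,\nu})dt=X_{\lambda,\nu}dW$.

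The core of the proof is a chain of uniform-in-$(\lambda,\nu)$ a priori estimates. First, It\^o's formula applied to $\tfrac12\|X_{\lambda,\nu}\|_{-1}^2$, using the identity $\langle u,-\Delta v\rangle_{-1}=\langle u,v\rangle_2$ together with the above noise bound, yields by Gronwall
\[
\E\sup_{t\le T}\|X_{\lambda,\nu}(t)\|_{-1}^2+\E\!\int_0^T\!\!\!\int_{\R^d}(\psi_\lambda+\nu I)(X_{\lambda,\nu})\,X_{\lambda,\nu}\,d\xi\,ds\le C(T)\|x\|_{-1}^2.
\]
Second, applying It\^o to $\tfrac1p\int|X_{\lambda,\nu}|^p d\xi$ with $p=\max(2,2m)$ (rigorously through a spatial mollification \`a la Claim~1 of Section~3), the deterministic drift $-(p-1)\int|X_{\lambda,\nu}|^{p-2}(\psi_\lambda'(X_{\lambda,\nu})+\nu)|\nabla X_{\lambda,\nu}|^2 d\xi$ is nonpositive, while the It\^o correction is bounded by $\tfrac{p-1}{2}\sum_k\mu_k^2|e_k|_\infty^2\|X_{\lambda,\nu}\|_p^p$; Gronwall then gives $\E\sup_{t\le T}\|X_{\lambda,\nu}(t)\|_p^p\le|x|_p^p e^{CT}$. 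Combined with the growth assumption (j), this translates into a uniform bound for $\psi_\lambda(X_{\lambda,\nu})$ in $L^{p/m}(\ooo\times(0,T)\times\R^d)$.

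To pass to the limit I would apply It\^o to $\|X_{\lambda,\nu}-X_{\lambda',\nu'}\|_{-1}^2$; the cross term
\[
2\,\E\!\int_0^t\!\langle\psi_\lambda(X_{\lambda,\nu})-\psi_{\lambda'}(X_{\lambda',\nu'})+\nu X_{\lambda,\nu}-\nu' X_{\lambda',\nu'},\,X_{\lambda,\nu}-X_{\lambda',\nu'}\rangle_2\,ds
\]
is handled through the Yosida splitting $u-v=(J_\lambda u-J_{\lambda'}v)+(\lambda\psi_\lambda(u)-\lambda'\psi_{\lambda'}(v))$, which produces a nonnegative monotone contribution plus a remainder of order $\lambda+\lambda'+\nu+\nu'$ controlled by the $L^{p/m}$-bound on $\psi_\lambda(X_{\lambda,\nu})$. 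The noise contribution is dominated by $C_\infty^2\int\|X_{\lambda,\nu}-X_{\lambda',\nu'}\|_{-1}^2 ds$ via Lemma~\ref{l4.3a}, and BDG plus Gronwall yield that $\{X_{\lambda,\nu}\}$ is Cauchy in $L^2(\ooo;C([0,T];\calh^{-1}))$. The limit $X$ has the regularity \eqref{e4.7}; weak-$L^{p/m}$ compactness extracts a limit $\eta$ of $\psi_\lambda(X_{\lambda,\nu})$, and the inclusion $\eta\in\psi(X)$ follows from the Minty--Browder monotonicity trick applied pointwise in $(\omega,t,\xi)$, combining the strong convergence $X_{\lambda,\nu}\to X$ with the identity $\psi_\lambda(X_{\lambda,\nu})\in\psi(J_\lambda X_{\lambda,\nu})$ and $J_\lambda X_{\lambda,\nu}\to X$. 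Passing to the limit in the stochastic integral (convergent in $L^2(\ooo;C([0,T];\calh^{-1}))$ by It\^o's isometry and Lemma~\ref{l4.3a}) and in the deterministic drift yields \eqref{e4.6}.

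Uniqueness follows from It\^o on $\|X-\widetilde X\|_{-1}^2$ for two solutions sharing the same initial datum: the cross term $2\int_0^t\langle\eta-\widetilde\eta,X-\widetilde X\rangle_2\,ds\ge 0$ by monotonicity of $\psi$, while the noise contribution is bounded by $C_\infty^2\int_0^t\|X-\widetilde X\|_{-1}^2\,ds$, so Gronwall forces $X=\widetilde X$. For positivity, when $x\ge 0$ a.e., It\^o applied at the approximating level to $\tfrac12|X_{\lambda,\nu}^-|_2^2$ (justified since $X_{\lambda,\nu}\in L^2([0,T]\times\ooo;L^2(\R^d))$ by the analogue of Claim~1) gives a nonpositive drift---since $\psi_\lambda(r)r^-\le 0$ and $\int\nabla X_{\lambda,\nu}^-\!\cdot\!\nabla(\psi_\lambda(X_{\lambda,\nu})+\nu X_{\lambda,\nu})\ge 0$---and an It\^o correction bounded by $\sum_k\mu_k^2|e_k|_\infty^2|X_{\lambda,\nu}^-|_2^2$, whence Gronwall gives $X_{\lambda,\nu}^-\equiv 0$, a property preserved in the limit. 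The main obstacle, and the reason for the dimension restriction $d\ge 3$, is that every step involving the noise or the identification of the nonlinearity rests on the multiplier bound of Lemma~\ref{l4.3a}, which in turn depends crucially on the Sobolev embedding $\calh\hookrightarrow L^{2d/(d-2)}$ and fails in $d\le 2$.
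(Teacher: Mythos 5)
Your overall architecture --- Yosida regularization of $\psi$ plus a viscous perturbation, uniform a priori estimates in $\calh^{-1}$ and in $L^p$, a Cauchy argument in $L^2(\ooo;C([0,T];\calh^{-1}))$, weak compactness for $\psi_\lbb(X_\lbb)$ in $L^{p/m}$, uniqueness and positivity via It\^o --- coincides with the paper's, and Lemma \ref{l4.3a} plays exactly the role you assign to it. The step that does not close is the identification $\eta\in\psi(X)$. You invoke a ``pointwise Minty--Browder trick'' based on ``the strong convergence $X_{\lbb,\nu}\to X$'' together with $\psi_\lbb(X_{\lbb,\nu})\in\psi(J_\lbb X_{\lbb,\nu})$. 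But the only strong convergence available is in $C([0,T];\calh^{-1})$; in $L^p(\ooo\times(0,T)\times\R^d)$ the approximants converge only weak-star, and $\psi_\lbb(X_\lbb)$ converges only weakly in $L^{p/m}$. A maximal monotone graph is not closed under (weak, weak) convergence; one needs in addition the inequality $\limsup_{\lbb\to0}\E\int\vf^2\psi_\lbb(X_\lbb)X_\lbb\le\E\int\vf^2\eta X$, and the product $\psi_\lbb(X_\lbb)X_\lbb$ cannot be passed to the limit from the convergences you actually have. The paper obtains precisely this inequality, \eqref{e4.24doi}, by writing the It\^o energy identity for $\|\vf X_\lbb\|^2_{-1}$ and for $\|\vf X\|^2_{-1}$ and comparing them --- that is, by using the equation itself --- and even then must combine it with the convexity of $j_\lbb$, Fatou's lemma, and a localized duality-map argument (equation \eqref{e4.30zz} with $J(Z)=|Z|^{p-2}Z$ on balls $B_{N+1}$) to extract $\eta\in\psi(X)$ from the resulting variational inequality. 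None of this machinery appears in your sketch, and without it the heart of the existence proof is missing.

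A secondary gap: you assert that It\^o's formula for $\frac1p|x|^p_p$ can be made rigorous ``through a spatial mollification \`a la Claim 1 of Section 3''. That claim concerns the $L^2$-norm via $(\a-\Delta)^{-1/2}$; for $p>2$ the functional is not $C^2$ on the state space, and the paper has to introduce a third approximation layer (the Yosida approximation $A^{\nu,\vp}_\lbb$ of the full drift operator, Claims 1--3 in the proof of Lemma \ref{l4.3}, together with the $L^p$-It\^o formula of \cite{13a}) to justify the estimate \eqref{e4.26}. This is a genuine technical obstacle, not a routine regularization. Finally, in the uniqueness argument you should localize with $\vf\in C^\9_0(\R^d)$ as the paper does, since reducing $\<\Delta(\eta_1-\eta_2),X_1-X_2\>_{-1}$ to $\int(\eta_1-\eta_2)(X_1-X_2)\,d\xi$ is only justified locally given that $\eta_i\in L^{p/m}$ and $X_i\in L^p$ on the whole space.
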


Theorem \ref{t4.2} is applicable to a large class of nonlinearities
$\psi:\R\to2^\R$ and, in particular, to
$$\psi(r)=\rho H(r)+\a r,\ \ff r\in\R, \psi(r) = \rho H(r-r_c) r,$$where $\rho>0$, $\a, r_c \ge0$, which models the dynamics
 of self-organized cri\-ti\-ca\-lity (see \cite{BDPR09}, \cite{5}, \cite{6}).
Here $H$ is the Heaviside function.

As mentioned earlier, Theorem \ref{t4.2} can be compared most closely to the main existence
 result of \cite{Ren}. But there are, however, a few notable differences as
 we explain below.  The function $\psi$ arising in \cite{Ren} is monotonically in\-crea\-sing, continuous and are assumed to satisfy a growth condition of the form $N(r)\le r\psi(r)\le C(N(r)+1)r,$ $\ff r\in\R$, where $N$ is a smooth and $\Delta_2$-regular Young function defining the Orlicz class $L_N$.  In contrast to this, here $\psi$ is any maximal monotone graph (multivalued) with arbitrary polynomial growth.

\bk\n{\bf Proof of Theorem \ref{t4.2}.}
Consider the approximating equation
\begin{equation}\label{e4.14}
\barr{l}
dX_\lbb-\Delta(\psi_\lbb(X_\lbb)+\lbb X_\lbb)dt=X_\lbb dW,\ t\in(0,T),\vsp
X_\lbb(0)=x,\earr\end{equation}where $\psi_\lbb=\frac1\lbb\,(1-(1+\lbb\psi)\1),\ \lbb>0$. We note that $\psi_\lbb=\pp j_\lbb$, where (see, e.g., \cite{barbu10})
$$j_\lbb(r)=\inf\left\{\frac{|r-\bar r|^2}{2\lbb}+j(\bar r);\ \bar r\in\R\right\},\ \ff r\in\R.$$

We have the following result.

\begin{lemma}\label{l4.3} Let $x\in\calh\1\cap L^p(\R^d)\cap L^2(\R^d)$, $p:=2m,$ $d\ge3$. Then \eqref{e4.14} has a unique solution
\begin{equation}\label{e4.15}
X_\lbb\in L^2(\ooo;C([0,T];\calh\1))\cap L^\9([0,T];L^p(\ooo\times\R^d)).\end{equation}Moreover, for all  $\lbb,\mu>0$, we have
\begin{eqnarray}\label{e4.16}
&\dd\E\sup_{0\le t\le T}\|(X_\lbb(t)-X_\mu(t)) \|^2_{-1}\le C(\lbb+\mu)\\[1mm]
&\dd\E|X_\lbb(t)|^p_p\le C|x|^p_p,\ \ff t\in[0,T],\label{e4.17}\\[1mm]
&\dd\E\int^T_0\int_{\R^d}|\psi_\lbb(X_\lbb)|^{\frac pm}dt\,d\xi\le C|x|^p_p,\ \ff\lbb>0,\label{e4.17a}\\[1mm]
&\dd\E\left[\sup_{0\le t\le T}\|X_\lbb(t)\|^2_{-1}\right]\le C\|x\|^2_{-1},\ \ff \lbb>0,\label{e4.17aa}
\end{eqnarray}where $C$ is independent of $\lbb,\mu$.\end{lemma}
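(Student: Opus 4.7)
The plan is first to produce $X_\lbb$ for each fixed $\lbb>0$ by reducing to the Lipschitz case of Section 3, and then to derive the four estimates in turn. Since $\psi_\lbb+\lbb I$ is Lipschitz and satisfies the coercivity \eqref{e3.5} with $\a=\lbb$, Theorem \ref{t3.1} applied with $\psi_\lbb+\lbb I$ in place of $\psi$ provides existence and uniqueness. One caveat is that in Section 3 the basis $\{e_k\}$ was orthonormal in $H\1(\R^d)$, whereas here it is orthonormal in $\calh\1$; however, thanks to Lemma \ref{l4.3a} and assumption \eqref{e4.2}, the multiplier bound \eqref{e3.4secund} has a direct $\calh\1$-analogue, and the variational argument of the proof of Lemma \ref{l3.2} carries over with $V=L^2(\R^d)$ and $H=\calh\1$, yielding an adapted $X_\lbb\in L^2(\ooo;C([0,T];\calh\1))$ that solves \eqref{e4.14}.

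For the $L^p$-bound \eqref{e4.17}, I would apply It\^o's formula to $x\mapsto|x|^p_p$ (justified via a mollification, since $\psi_\lbb+\lbb I$ strongly monotone forces $X_\lbb\in L^2(0,T;H^1(\R^d))$ through the variational theory). Integration by parts turns the drift into $-p(p-1)\int|X_\lbb|^{p-2}(\psi'_\lbb(X_\lbb)+\lbb)|\na X_\lbb|^2\,d\xi\le0$, and the quadratic variation contributes $\tfrac{1}{2}p(p-1)\sum_k\mu^2_k\int|X_\lbb|^p e^2_k\,d\xi\le C|X_\lbb|^p_p$ by \eqref{e4.2}, so Gronwall combined with BDG for $p=1$ delivers \eqref{e4.17}. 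The $\calh\1$-bound \eqref{e4.17aa} follows from It\^o applied to $\|\cdot\|^2_{-1}$: the identity $\<\eta,\Delta g\>_{-1}=-\int\eta g\,d\xi$ turns the drift into $-2\int X_\lbb(\psi_\lbb(X_\lbb)+\lbb X_\lbb)\,d\xi\le0$ since $r\psi_\lbb(r)\ge 0$, while the quadratic variation $\sum_k\mu^2_k\|e_kX_\lbb\|^2_{-1}$ is dominated by $C\|X_\lbb\|^2_{-1}$ via Lemma \ref{l4.3a}. For the more delicate \eqref{e4.17a}, I would apply It\^o to $\int_{\R^d}j_\lbb(X_\lbb)\,d\xi$, which produces the gain $\int|\na\psi_\lbb(X_\lbb)|^2\,d\xi$; combined with the Sobolev embedding $\calh\subset L^{2d/(d-2)}$ from Lemma \ref{l1.1}, the polynomial growth \eqref{e4.1} and the $L^p$-bound \eqref{e4.17}, an interpolation step then produces the desired uniform $L^{p/m}$ estimate on $\psi_\lbb(X_\lbb)$.

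For the Cauchy estimate \eqref{e4.16}, I would apply It\^o to $\|X_\lbb-X_\mu\|^2_{-1}$, whose drift equals $-2\int(X_\lbb-X_\mu)[(\psi_\lbb(X_\lbb)-\psi_\mu(X_\mu))+\lbb X_\lbb-\mu X_\mu]\,d\xi$. The resolvent identity $X_\lbb=J_\lbb X_\lbb+\lbb\psi_\lbb(X_\lbb)$, with $J_\lbb=(1+\lbb\psi)^{-1}$, together with $\psi_\lbb(X_\lbb)\in\psi(J_\lbb X_\lbb)$, decomposes the $\psi$-contribution as a monotonicity term $\int(J_\lbb X_\lbb-J_\mu X_\mu)(\psi_\lbb(X_\lbb)-\psi_\mu(X_\mu))\,d\xi\ge 0$ (which is discarded) plus a remainder bounded via Young by $\tfrac{\lbb+\mu}{2}(|\psi_\lbb(X_\lbb)|^2_2+|\psi_\mu(X_\mu)|^2_2)$; the linear $\lbb X_\lbb-\mu X_\mu$ piece is treated analogously and controlled by $(\lbb+\mu)(|X_\lbb|^2_2+|X_\mu|^2_2)$. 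Invoking \eqref{e4.17a} for the $\psi$-remainder, \eqref{e4.17} with $p\ge 2$ for the linear piece, the multiplier bound of Lemma \ref{l4.3a} for the quadratic variation, BDG and Gronwall then produce \eqref{e4.16}. The main obstacle is precisely \eqref{e4.17a}: because $\R^d$ has infinite measure, the growth bound $|\psi_\lbb(r)|\le C(1+|r|^m)$ alone does not integrate against $d\xi$, and the core of the argument lies in turning the dissipative gain on $\na\psi_\lbb(X_\lbb)$ into a uniform $L^{p/m}$ estimate---this step relies on the Sobolev embedding of Lemma \ref{l1.1} and is precisely why the treatment is restricted to $d\ge 3$.
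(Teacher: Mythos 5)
Your overall architecture (reduction to the Lipschitz case, It\^o with $|\cdot|^p_p$ and with the $\calh\1$-norm, resolvent-identity decomposition for \eqref{e4.16}) is the paper's, but the construction of $X_\lbb$ as a $\calh\1$-valued process has a genuine gap. You propose to run the variational argument ``with $V=L^2(\R^d)$ and $H=\calh\1$''. This is not a Gelfand triple: $L^2(\R^d)$ is \emph{not} contained in $\calh\1$ (Section 2 explicitly warns that $L^2$ is not the pivot space for the pair $\calh,\calh\1$; e.g.\ for $d=3$ any $\eta\in L^2$ with $\calf(\eta)(\xi)=|\xi|^{-1}$ near $\xi=0$ fails to lie in $\calh\1$), so the hypotheses (H1)--(H4) cannot even be formulated and the It\^o formula for $\|\cdot\|^2_{-1}$ that you invoke for \eqref{e4.17aa} and \eqref{e4.16} is not available. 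This is precisely why the paper inserts the additional regularization $(\nu-\Delta)$ in \eqref{e4.18}: for $\nu>0$ one does have $L^2(\R^d)\subset H\1(\R^d)$, Lemma \ref{l3.2} applies, all $\calh\1$-estimates are first proved in the equivalent norms $|\cdot|_{-1,\nu}$ uniformly in $\nu$, and only then does one let $\nu\to0$ using $|y|_{-1,\nu}\uparrow\|y\|_{-1}$. Your proposal skips this device entirely, and without it neither \eqref{e4.15} nor the two $\|\cdot\|_{-1}$-estimates are justified.

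A second, more local problem concerns \eqref{e4.17a}. The paper obtains it directly from \eqref{e4.17} and the growth bound $|\psi_\lbb(r)|\le C|r|^m$ (uniform in $\lbb$), with no extra energy identity. Your route via It\^o applied to $\int_{\R^d}j_\lbb(X_\lbb)\,d\xi$ does not close: the second-order It\^o correction is $\tfrac12\sum_k\mu^2_k\int\psi'_\lbb(X_\lbb)X^2_\lbb e^2_k\,d\xi$, and since $\psi'_\lbb$ is in general only bounded by $1/\lbb$ (think of a graph with a jump, as in the self-organized criticality example), this term is not controlled uniformly in $\lbb$; moreover the Sobolev gain lands you in $L^2(0,T;L^{2d/(d-2)})$, which does not interpolate to the required $L^{p/m}=L^2$ bound without further input. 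Finally, for the rigor of the $|\cdot|^p_p$ computation, membership $X_\lbb\in L^2(0,T;H^1)$ is not sufficient --- one needs $|X_\lbb|^{p-2}X_\lbb\in H^1$ --- and the paper handles this with a second Yosida approximation $A^{\nu,\vp}_\lbb$ of the full drift operator together with the invariant-set/fixed-point argument of Claims 1--3; your appeal to ``a mollification'' would have to be expanded to that level. Your treatment of \eqref{e4.16} via the resolvent identity is correct and coincides with the paper's (modulo the missing $\nu$-layer).
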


\begin{proof}
We consider for each fixed $\lbb$ the equation (see \eqref{e3.6})
\begin{equation}\label{e4.18}
\barr{l}
dX^\nu_\lbb+(\nu-\Delta)(\psi_\lbb(X^\nu_\lbb)+\lbb X^\nu_\lbb)dt=X^\nu_\lbb dW\vsp
X^\nu_\lbb(0)=x,\earr\end{equation}where $\nu>0$. Let $x\in L^2(\R^d)\cap L^p(\R^d)\cap\calh\1$. By Claim 1 in the proof of Lemma \ref{l3.2},   \eqref{e4.18} has a unique solution $X^\nu_\lbb\in L^2(\Omega;L^\infty([0,T];L^2(\R^d)))\cap L^2(\Omega\times[0,T];H^1(\R^d))$ with continuous sample paths in $L^2(\R^d)$.

As seen in the proof of Theorem \ref{t3.1}, we have, for $\nu\to0$,
$$\barr{ll}
X^\nu_\lbb\to X_\lbb&\mbox{ strongly in }L^2(\ooo;C([0,T];H\1(\R^d)))\vsp
  &\mbox{ weak-star in }L^2(\Omega;L^\9([0,T];L^2(\R^d))),\vsp
  &\mbox{ and, by \eqref{e3.12prim}, along a subsequence also,}\earr$$
 where $X_\lbb$ is the solution to \eqref{e4.14}. It remains to be shown that $X_\lbb$ satisfies \eqref{e4.15}--\eqref{e4.17aa}. In order to explain the ideas, we apply first (formally) It\^o's formula to \eqref{e4.18} for the function $\varphi(x)=\frac1p\,|x|^p_p.$ We obtain
\begin{equation}\label{e4.19}
\barr{l}
\dd\frac1p\,\E|X^\nu_\lbb(t)|^p_p+
\E\int^t_0\int_{\R^d}(\nu-\Delta)(\psi_\lbb(X^\nu_\lbb)+
\lbb X^\nu_\lbb)|X^\nu_\lbb|^{p-2}X^\nu_\lbb ds\, d\xi\vsp
\qquad
\dd=\frac1p\,|x|^p_p+\frac{p-1}2\,\E
\int^t_0\int_{\R^d}\sum^\9_{k=1}\mu_k^2|X^\nu_\lbb e_k|^2|X^\nu_\lbb|^{p-2}dt\,d\xi.\earr\end{equation}
Taking into account that $X^\nu_\lbb,\psi_\lbb(X^\nu_\lbb)\in L^2(0,T;H^1(\R^d))$, $\pas$, by Claim 1 in the proof of Lemma \ref{l3.2}, we have

$$
\dd\int^t_0\int_{\R^d}(\nu-\Delta)(\psi_\lbb(X^\nu_\lbb)+\lbb X^\nu_\lbb)|X^\nu_\lbb|^{p-2}X^\nu_\lbb ds\,d\xi
\ge\lbb(p-1)\dd\int^t_0\int_{\R^d}
|\nabla X^\nu_\lbb|^2
|X^\nu_\lbb|^{p-2}d\xi\,ds,
$$and
by \eqref{e4.2} we have
$$\E\int^t_0\int_{\R^d}\sum^\9_{k=1}\mu^2_k|X^\nu_\lbb e_k|^2|X^\nu_\lbb|^{p-2}ds\,d\xi
\le C_\9\E\int^t_0
\int_{\R^d}|X^\nu_\lbb|^pd\xi\, ds<\9.$$Then, we obtain by \eqref{e4.19} via Gronwall's lemma
\begin{equation}\label{e4.21a}
\E|X^\nu_\lbb(t)|^p_p\le C|x|^p_p,\ t\in(0,T),\end{equation}
and, by \eqref{e4.1},
\begin{equation}\label{e4.21aa}
\E\int^t_0\int_{\R^d}|\psi_\lbb(X^\nu_\lbb)|^{\frac pm}dt\,d\xi\le C|x|^p_p,\ t\in[0,T].\end{equation}
It should be said, however, that the above argument is formal, because the function $\vf$ is not of class $C^2$ on $L^2(\R^d)$ and we do not know a priori if the integral in the left side of \eqref{e4.19} makes sense, that is, whether $|X_\lbb^\nu|^{p-2}X^\nu_\lbb\in L^2(0,T;L^2(\ooo;H^1(\R^d)))$.  To make it rigorous,  we approximate $X^\nu_\lambda$ by a sequence $\{X_\lambda^{\nu,\varepsilon}\}$ of solutions to the equation
\begin{equation}\label{e4.19b}
\begin{array}{l}
dX^{\nu,\varepsilon}_\lambda+
A^{\nu,\vp}_\lbb(X^{\nu,\vp}_\lbb)
 dt=X^{\nu,\varepsilon}_\lambda dW,\vsp
X^{\nu,\varepsilon}_\lambda(0)=x.\end{array}\end{equation}


\n Here, $A^{\nu,\vp}_\lbb=\frac1\vp\,(I-(I+\vp A^\nu_\lbb)\1),\ \vp\in(0,1)$, is the Yosida approximation of the operator $A^{\nu}_\lbb x=(\nu-\Delta)(\psi_\lbb(x)+\lbb x)$, $\ff x\in D(A^\nu_\lbb)=H^1(\R^d)$. We set $J_\vp=(I+\vp A^\nu_\lbb)\1$ and note that $J_\vp$ is Lipschitz in $H=H\1(\R^d)$ as well as in all $L^q(\R^d)$ for $1<q<\9$. Moreover, we have
\begin{equation}\label{e4.21}
|J_\vp(x)|_q\le|x|_q,\ \ff x\in L^q(\R^d), \end{equation}
see \cite{BDR08}, Lemma 3.1. Since $A^{\nu,\vp}_\lbb$ is Lipschitz in $H$, equation \eqref{e3.1} has a unique adapted solution $X^{\nu,\vp}_\lbb\in L^2(\ooo;C([0,T];H)$ and by It\^o's formula we have
$$\frac12\,\E|X^{\nu,\vp}_\lbb(t)|^2_{-1}
\le\frac12\,|x|^2_{-1}+C_1
\sum^\9_{k=1}\mu^2_k\E\int^t_0|X^{\nu,\vp}_\lbb(s)e_k|^2_{-1}ds,$$which, by virtue of (jj), yields
\begin{equation}\label{4.22}\E|X^{\nu,\vp}_\lbb(t)|^2_{-1}\le C_2|x|^2_{-1},\ \ff \vp>0,\ x\in H.\end{equation}
Similarly, since $A^{\nu,\vp}_\lbb$ is Lipschitz in $L^2(\R^d)$ (see Lemma 4.6 below), we have also that $X^{\nu,\vp}_\lbb\in L^2(\ooo; C([0,T];L^2(\R^2)))$ and, again by It\^o's formula applied to the function $|X^{\nu,\vp}_\lbb(t)|^2_2$, we obtain 
$$\E|X^{\nu,\vp}_\lbb(t)|^2_2\le\frac12\,|x|^2_2+C_3
\sum^\9_{k=1}\mu^2_k\E\int^\9_0|X^{\nu,\vp}_\lbb(s)e_k|^2_2ds,$$
which yields, by virtue of (jj),
\begin{equation}\label{e4.23}
\E|X^{\nu,\vp}_\lbb(t)|^2_2\le C_4|x|^2_2,\ \ff t\in[0,T].\end{equation}

\n{\bf Claim 1.} {\it For $p\in[2,\9)$ and $x\in L^p(\R^d)$, we have that  $X^{\nu,\vp}_\lbb\in L^\9_W([0,T];$ $L^p(\ooo;L^p(\R^d))\cap L^2(\ooo;L^2(\R^d))),$  where here and below the subscript $W$ refers to $(\calf_t)$-adapted processes.}\bk

\begin{proof} For $R>0$, consider the set
$$\barr{lcl}
\calk_R&=&\{X\in L^\9_W([0,T];L^p(\ooo;L^p(\R^d))\cap L^2(\ooo;L^2(\R^d))),\vsp
&&\ \ e^{-p\alpha t}\E|X(t)|^p_p\le R^p,\ e^{-2\alpha t}\E|X(t)|^2_2\le R^2, t\in[0,T]\}.\earr$$Since, by \eqref{e4.19b}, $X^{\nu,\vp}_\lbb$ is a fixed point of the map
$$X\buildrel F\over\longrightarrow e^{-\frac t\vp}X+\frac1\vp\int^t_0 e^{-\frac{t-s}\vp}J_\vp(X(s))ds
+\int^t_0e^{-\frac{(t-s)}\vp}X(s)dW(s),$$obtained by iteration in $C_W([0,T];L^2(\ooo;H\cap L^2(\R^d)))$, it suffices to show that $F$ leaves the set $\calk_R$   invariant for $R>0$ large enough. By \eqref{e4.21}, we~have
\begin{equation}\label{e4.24}
\barr{l}
\dd\(e^{-p\a t}\E\left|e^{-\frac t\vp}x+\frac1\vp\int^t_0 e^{-\frac{t-s}\vp}J_\vp(X(s))ds\right|^p_p
\right)^{\frac1p}\vsp
\qquad\le\dd e^{-\(\frac1\vp+\a\)t}|x|_p
+e^{-\a t}\int^t_0\frac1\vp
e^{-\frac{(t-s)}\vp}
(\E|X(s)|^p_p)^{\frac1p}ds\vsp
\qquad\le\dd e^{-\(\frac1\vp+\a\)t}|x|_p+\frac {R}{1+\a\vp},\earr\end{equation}and, similarly, that
  \begin{equation}\label{e4.25}
\dd\(e^{-2\a t}\E\left|e^{-\frac t\vp}x+\frac1\vp\int^t_0 e^{-\frac{(t-s)}\vp}J_\vp(X(s))ds\right|^2_2\)
^{\frac12}
\le\dd e^{-\(\frac1\vp+\a\)t}|x|_2
+ \frac R{1+\a\vp}.
  \end{equation}

Now, we set
$$Y(t)=\int^t_0 e^{-\frac{(t-s)}\vp}X(s)dW(s),\ t\ge0.$$We have
$$\barr{l}
dY+\dd\frac1\vp\ Y\,dt=X\,dW,\ \ t\ge0,\vsp
Y(0)=0.\earr$$Equivalently,
$$d(e^{\frac t\vp}Y(t))=e^{\frac t\vp}\,X(t)dW(t),\ t>0;\ Y(0)=0.$$
By Lemma  5.1  in \cite{13a}, it follows that $e^{\frac t\vp}Y$ is an $L^p(\R^d)$-valued $(\calf_t)$-adapted continuous process on $[0,\9)$ and
$$\E|e^{\frac t\vp}Y(t)|^p_p=
\frac12\,p(p-1)\sum^\9_{k=1}\mu^2_k
\E\int^t_0\int_{\R^d}|e^{\frac s\vp}Y(s)|^{p-2}|e^{\frac s\vp}X(s)e_k|^2ds.$$
This yields via Hypothesis (jj)
$$\E|e^{\frac t\vp}Y(t)|^p_p\le\frac12\,(p-1)\E\int^t_0
|e^{\frac s\vp}Y(s)|^p_pds+
C\E\int^t_0|e^{\frac s\vp}X(s)|^p_pds,\ \ff t\in[0,T],$$and, therefore,
$$\E|Y(t)|^p_p\le C_1e^{-\(\a+\frac 1\vp\)pt}\E\int^t_0|e^{\frac s\vp}X(s)|^p_pds
\le\frac{R^p e^{-p\a t}\vp C_1}{p(1+\vp\a )},\ \ff t\in[0,T].$$
Similarly, we get
$$e^{-2\a t}\E|Y(t)|^2_2\le\frac{R^2  \vp C_1}{2(1+\vp\a)}, \ \ff t\in[0,T].$$Then, by formulae \eqref{e4.24}, \eqref{e4.25}, we infer that, for $\a$ large enough and \mbox{$R>2(|x|_p+|x|_2),$} $F$ leaves  $\calk_R$ invariant, which proves Claim 1.\end{proof}

\bk\n{\bf Claim 2.} {\it We have, for all $p\in [2,\9)$  and $x\in L^p(\R^d)$, that there exists $C_p\in(0,\9)$ such that
\begin{equation}\label{e4.26}
\st{t\in[0,T]}{\mathrm
 ess\ sup}
\E|X_\lbb^{\nu,\vp}(t)|^p_p\le C_p\ \mbox{ for all }\vp,\lbb,\nu\in(0,1).\end{equation}}

\begin{proof} Again invoking Lemma 5.1 in \cite{13a}, we have by \eqref{e4.19b} that $X^{\nu,\vp}_\lbb$ satisfies
\begin{equation}\label{e4.27}
\barr{l}
\E|X^{\nu,\vp}_\lbb(t)|^p_p
=|x|^p_p-p\ \E\dd\int^t_0
\int_{\R^d}
A^{\nu,\vp}_\lbb(X^{\nu,\vp}_\lbb)
X^{\nu,\vp}_\lbb|X^{\nu,\vp}_\lbb|^{p-2}
d\xi\,ds\vsp
+p(p-1)\dd\sum^\9_{k=1}
\mu^2_k\E\int^t_0\int_{\R^d}
|X^{\nu,\vp}_\lbb|^{p-2}|X^{\nu,\vp}_\lbb e_k|^2d\xi\,ds.
\earr\end{equation}
On the other hand, $A^{\nu,\vp}_\lbb(X^{\nu,\vp}_\lbb)=\frac1\vp\,(X^{\nu,\vp}_\lbb-J_\vp(X^{\nu,\vp}_\lbb))$ and so we have
$$
\dd\int_{\R^d}
A^{\nu,\vp}_\lbb (X^{\nu,\vp}_\lbb)X^{\nu,\vp}_\lbb
|X^{\nu,\vp}_\lbb|^{p-2}d\xi
=\dd\frac1\vp\int_{\R^d}|X^{\nu,\vp}_\lbb|^pd\xi-
\dd\frac1\vp\int_{\R^r}J_\vp(X^{\nu,\vp}_\lbb)|X^{\nu,\vp}_\lbb|^{p-2}X^{\nu,\vp}_\lbb d\xi.
$$
Recalling \eqref{e4.21}, we get, via the H\"older inequality,
$$\int_{\R^d}A^{\nu,\vp}_\lbb(X^{\nu,\vp}_\lbb)X^{\nu,\vp}_\lbb|X^{\nu,\vp}_\lbb|^{p-2}d\xi\ge0,$$and so, by \eqref{e4.27} and Hypothesis (jj), we obtain, via Gronwall's lemma, estimate \eqref{e4.26}, as claimed. \end{proof}

\mk\n{\bf Claim 3.} {\it We have, for $\vp\to0$,
$$X^{\nu,\vp}_\lbb\longrightarrow X^\nu_\lbb\mbox{ strongly in }L^\9_W([0,T];L^2(\ooo;H))$$
 and weakly$^*$ in $L^\9([0,T];L^p(\ooo;L^p(\R^d))\cap L^2(\ooo;L^2(\R^d))).$}\bk

\begin{proof} For simplicity, we write $X_\vp$ instead of $X^{\nu,\vp}_\lbb$ and $X$ instead of $X^\nu_\lbb$. Also, we set $\g(r)\equiv\psi_\lbb(r)+\lbb r$.

Subtracting equations \eqref{e4.19b} and \eqref{e4.18}, we get via It\^o's formula and because $A^{\nu,\vp}_\lbb$ is monotone on $H$
$$\barr{r}
\dd\frac12\,\E|X_\vp(t)-X(t)|^2_{-1,\nu}+
\E\dd\int^t_0\int_{\R^d}(\g(J_\vp(X))
-\g(X))(X_\vp-X)d\xi\,ds\vsp
\le C\E\dd\int^t_0|X_\vp(s)-X(s)|^2_{-1,\nu}ds,
\earr$$and hence, by Gronwall's lemma, we obtain
\begin{equation}\label{e4.26a}
\E|X_\vp(t)-X(t)|^2_{-1,\nu}\le C\E\int^T_0\int_{\R^d}
|\g(J_\vp(X))-\g(X)||X_\vp-X|d\xi\,ds.\end{equation}
On the other hand, it follows by \eqref{e4.21} that
$$\int_{\ooo\times[0,T]\times\R^d}|J_\vp(X)|^2\mathbb{P}(d\oo)dt\,d\xi
\le\dd\int_{\ooo\times[0,T]\times\R^d}|X|^2\mathbb{P}(d\oo)dt\,d\xi,$$while, for $\vp\to0$,
$$J_\vp(y)\longrightarrow y\mbox{ in }H\1,\ \ \ff y\in H\1,$$(because $A^{\nu,\vp}_\lbb$ is maximal monotone in $H\1(\R^d)$) and so, $J_\vp(X(t,\oo))\longrightarrow X(t,\oo)$ in $H\1(\R^d)$ for all $(t,\oo)\in(0,T)\times\ooo.$ Hence, as $\vp\to0$,
\begin{equation}\label{e4.26aa}
J_\vp(X)\longrightarrow X\mbox{ weakly in }L^2(\ooo\times[0,T]\times\R^d),\end{equation}and, according to the inequality above, this implies that, for $\vp\to0$,
$$|J_\vp(X)|_{L^2((0,T)\times\ooo\times\R^d)}\longrightarrow|X|_{L^2((0,T)\times\ooo\times\R^d)}.$$Hence, $J_\vp(X)\longrightarrow X\mbox{ strongly in }L^2(\ooo\times[0,T]\times\R^d)\mbox{ as }\vp\to0.$ Now, ta\-king into account that $\g$ is Lipschitz, we conclude by \eqref{e4.26a}, \eqref{e4.26aa} and by estimates \eqref{e4.23}, \eqref{e4.26} that Claim 3 is true.~\end{proof}

Now, we can complete the proof of Lemma \ref{l4.3}. Namely, letting first $\vp\to0$ and then $\nu\to\9$ in  \eqref{e4.26}, we get \eqref{e4.17} and hence \eqref{e4.17a} as desired.



Now, let us prove \eqref{e4.16} and \eqref{e4.17aa}. Arguing as in the proof of Theorem \ref{t3.1}, we obtain
\begin{equation}\label{e4.30z}
\barr{l}
\dd\frac12\,|X^\nu_\lbb(t)|^2_{-1,\nu}+
\dd\int^t_0\int_{\R^d}(\psi_\lbb(X^\nu_\lbb)+\lbb X^\nu_\lbb)X^\nu_\lbb d\xi\,ds\vsp
\qquad=\dd\frac12\,|x|^2_{-1,\nu}+\frac12\int^t_0\int_{\R^d}\sum^\9_{k=1}\mu^2_k|X^\nu_\lbb e_k|^2_{-1,\nu}d\xi\,ds\vsp
\qquad+\dd\int^t_0\<X^\nu_\lbb,X^\nu_\lbb dW\>_{-1,\nu}ds.\earr\end{equation}
Keeping in mind that, by \eqref{e4.12}, $|X^\nu_\lbb e_k|_{-1,\nu}\le C|X^\nu_\lbb|_{-1,\nu}(|e_k|_\9+|\na e_k|_d)$, where $C$ is independent of $\nu$, we obtain by
  the Burkholder-Davis-Gundy inequality for $p=1$ (cf. the proof of Theorem \ref{t3.1})
$$\E\sup_{t\in[0,T]}|X^\nu_\lbb(t)|^2_{-1,\nu}
+\lbb\E\int^T_0|X^\nu_\lbb|^2_2ds\le C|x|^2_{-1,\nu}.$$

Taking into account that
$$\lim_{\nu\to0}|y|_{-1,\nu}=\|y\|_{-1},\ \ff y\in\calh\1,$$
we obtain, as in Theorem \ref{t3.1} (see the part following \eqref{e319prime}),  that
\begin{equation}
\E\left[\sup_{t\in[0,T]}
\|X_\lbb(t)\|^2_{-1}\right]+
\lbb\E\int^T_0|X_\lbb(t)|^2_2dt\le C\|x\|^2_{-1},\ \ff\lbb>0,\end{equation}where $C$ is independent of $\lbb$. In particular, \eqref{e4.17aa} holds.

Completely similarly, one proves \eqref{e4.16}. Namely, we have
$$d(X^\nu_\lbb-X^\nu_\mu)+(\nu-\Delta)
(\psi_\lbb(X^\nu_\lbb)+\lbb X^\nu_\lbb-\psi_\mu(X^\nu_\mu)-\mu X^\nu_\mu)dt=(X^\nu_\lbb-X^\nu_\mu)dW$$and again   proceeding as in the proof of Theorem \ref{t3.1},   we obtain as above that

$$\barr{l}
\dd\frac12\,| X^\nu_\lbb(t)-X^\nu_\mu(t) |^2_{-1,\nu}\vsp
\qquad+
\dd\int^t_0\int_{\R^d}
(\psi_\lbb(X^\nu_\lbb)+\lbb X^\nu_\lbb-\psi_\mu(X^\nu_\mu)-\mu X^\nu_\mu)
(X^\nu_\lbb-X^\nu_\mu) d\xi\,ds\vsp
\qquad=\dd\frac12\int^t_0\int_{\R^d}
\sum^\9_{k=1}\mu^2_k|(X^\nu_\lbb-X^\nu_\mu) e_k|^2_{-1,\nu}ds\vsp
\qquad+\dd\int^t_0\<X^\nu_\lbb-X^\nu_\mu, (X^\nu_\lbb-X^\nu_\mu)dW \>_{-1,\nu},\ t\in[0,T].\earr$$
Then, applying once again  the Burkholder-Davis-Gundy inequality for \mbox{$p=1$,} and the fact that, by Hypothesis (j), $|\psi_\lbb(r)|\le C|r|^m,\ \ff \in\R$ with $C$ independent of $\lbb,$ we get, proceeding as in the proof of Theorem \ref{t3.1}, that
$$\E\left[\sup_{t\in[0,T]}| X^\nu_\lbb(t)-X^\nu_\mu(t) |^2_{-1}\right]\le C(\lbb+\mu),$$where $C$ is independent of $\nu,\lbb,\mu$. (For details, we refer to the proof of (3.10), (3.14) in \cite{BDPR09}). Letting $\nu\to0$  as in the previous case, we obtain \eqref{e4.16}, as claimed. This completes the proof of Lemma \ref{l4.3}.\end{proof}

Above we have used the  lemma below.

\begin{lemma}\label{l4.6} $A^{\nu,\vp}_\lbb$ is Lipschitz in $L^2(\R^d)$.\end{lemma}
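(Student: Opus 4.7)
Because
$$A^{\nu,\vp}_\lbb(x)-A^{\nu,\vp}_\lbb(y)=\tfrac{1}{\vp}\bigl[(x-y)-(J_\vp x-J_\vp y)\bigr],\quad x,y\in L^2(\R^d),$$
it suffices to show that the resolvent $J_\vp=(I+\vp A^\nu_\lbb)^{-1}$ is Lipschitz from $L^2(\R^d)$ into itself. Write $g:=\psi_\lbb+\lbb I$; since $\psi_\lbb$ is monotone nondecreasing and Lipschitz, $g$ is Lipschitz and strictly monotone with $g'\ge\lbb$ a.e., so $(g(a)-g(b))(a-b)\ge\lbb(a-b)^2$ for all $a,b\in\R$.

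Fix $x_1,x_2\in L^2(\R^d)$ and set $u_i:=J_\vp(x_i)$, which by \eqref{e4.21} lies in $L^2(\R^d)$. Since $u_i\in D(A^\nu_\lbb)=H^1(\R^d)$, we have $g(u_i)\in H^1(\R^d)$, and the resolvent identity $u_i+\vp(\nu-\Delta)g(u_i)=x_i$ combined with $x_i-u_i\in L^2(\R^d)$ forces $(\nu-\Delta)g(u_i)\in L^2(\R^d)$; standard elliptic regularity then gives $g(u_i)\in H^2(\R^d)$. I regard this $H^2$-regularity as the only delicate point, since it is what legitimizes the $L^2$-pairing and integration by parts carried out in the next step.

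Subtracting the two resolvent equations and pairing in $L^2(\R^d)$ against $g(u_1)-g(u_2)\in H^2(\R^d)$ yields
$$\int_{\R^d}(u_1-u_2)\bigl(g(u_1)-g(u_2)\bigr)d\xi+\vp\nu|g(u_1)-g(u_2)|_2^2+\vp|\nabla(g(u_1)-g(u_2))|_2^2=\int_{\R^d}(x_1-x_2)\bigl(g(u_1)-g(u_2)\bigr)d\xi.$$
Bounding the first term from below by $\lbb|u_1-u_2|_2^2$, discarding the nonnegative gradient term, and applying Cauchy--Schwarz on the right gives successively $\vp\nu|g(u_1)-g(u_2)|_2\le|x_1-x_2|_2$ and then $\lbb|u_1-u_2|_2^2\le(\vp\nu)^{-1}|x_1-x_2|_2^2$. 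Hence $J_\vp$ is $(\vp\nu\lbb)^{-1/2}$-Lipschitz on $L^2(\R^d)$, and the opening identity promotes this to Lipschitz continuity of $A^{\nu,\vp}_\lbb$ on $L^2(\R^d)$ with constant $\vp^{-1}\bigl(1+(\vp\nu\lbb)^{-1/2}\bigr)$.
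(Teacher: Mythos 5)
Your proof is correct and follows essentially the same route as the paper: reduce to the resolvent $J_\vp$, subtract the two resolvent identities, pair against $\g(u_1)-\g(u_2)$ with $\g=\psi_\lbb+\lbb I$, and use the strict monotonicity $(\g(a)-\g(b))(a-b)\ge\lbb(a-b)^2$ supplied by the added $\lbb I$. The only substantive difference is how you close the estimate: you retain the $\vp\nu|\g(u_1)-\g(u_2)|^2_2$ term and apply Cauchy--Schwarz twice, which gives $J_\vp$ the Lipschitz constant $(\vp\nu\lbb)^{-1/2}$, whereas the paper bounds $|\g(u_1)-\g(u_2)|_2\le({\mathrm{Lip}}\,\g)\,|u_1-u_2|_2$ on the right-hand side and cancels, obtaining the constant $({\mathrm{Lip}}\,\g)/\lbb$, uniform in $\vp$ and $\nu$. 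For the lemma as stated, and for its sole use in the paper (fixed $\nu,\vp,\lbb$, to produce an $L^2$-valued solution of \eqref{e4.19b}), your degenerate constant is harmless, though it would not survive the later limits $\vp\to0$, $\nu\to0$ if a uniform resolvent bound were required there. Your excursion into $H^2$-regularity is legitimate (on $\R^d$ it follows from the Fourier transform) but not really needed: for $v\in H^1(\R^d)$ with $(\nu-\Delta)v\in L^2(\R^d)$ the $L^2$-pairing $\<(\nu-\Delta)v,v\>_2$ already equals $\nu|v|^2_2+|\na v|^2_2$ by the $H^1$--$H^{-1}$ duality, which is all the paper's one-line integration by parts uses.
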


\begin{proof} It suffices to check that $J_\vp$ is Lipschitz in $L^2(\R^d)$. We set $\g(r)=\psi_\lbb(r)+\lbb r$. We have, for $x,\bar x\in L^2(\R^d)$,
$$J_\vp(x)-J_\vp(\bar x)-\vp\Delta(\g(J_\vp(x))-\g(J_\vp(\bar x)))=x-\bar x.$$Multiplying by $\g(J_\vp(x))-\g(J_\vp(\bar x))$ in $L^2(\R^d)$, we get
$$\<J_\vp(x)-J_\vp(\bar x),\g(J_\vp(x))-
\g(J_\vp(\bar x))\>_2
\le|\g(J_\vp(x))-\g(J_\vp(\bar x))|_2|x-\bar x|_2.$$
Taking into account that $(\g(r)-\g(\bar r))(r-\bar r)\ge L|r-\bar r|$, $\ff r,\bar r\in\R$, and that $\g$ is Lipschitz, we get
$$|J_\vp(x)-J_\vp(\bar x)|_2\le C|x-\bar x|_2,$$as claimed.\end{proof}

\bk\n{\bf Proof of Theorem \ref{t4.2} (continued).} By \eqref{e4.16}-\eqref{e4.17aa}, it follows that there is a process $X\in L^\9([0,T];L^p(\ooo\times\R^d))$ such that, for $\lbb\to0$,
\begin{equation}\label{e4.22doi}
\barr{rcll}
X_\lbb&\to&X&\mbox{weak-star in $L^\9([0,T];L^p(\ooo\times\R^d))$}\vsp
\lbb X_\lbb&\to&0&\mbox{strongly in $L^2([0,T];L^2(\ooo\times\R^d))$}\vsp
\psi_\lbb(X_\lbb)&\to&\eta&\mbox{weakly in }L^{\frac pm}([0,T]\times\ooo\times\R^d)\vsp
X_\lbb&\to&X&\mbox{strongly in }L^2(\ooo;C([0,T];\calh\1)). \earr\end{equation}
It remains to be shown that $X$ is a solution to \eqref{e1.1} in the sense of Definition~\ref{d4.1}.

By \eqref{e4.14} and \eqref{e4.22doi}, we see that
\begin{equation}\label{e4.23doi}
\barr{l}
dX-\Delta\eta dt=XdW,\ t\in(0,T)\vsp
X(0)=x.\earr\end{equation}To prove that
$\eta\in \psi(X)$, a.e. in $\ooo\times(0,T)\times\R^d$, it suffices to show that, for each $\vf\in C^\9_0(\R^d)$, we have
\begin{equation}\label{e4.24doi}
\limsup_{\lbb\to0}\E\dd\int^T_0\int_{\R^d}\vf^2\psi_\lbb(X_\lbb)X_\lbb dt\,d\xi\le\E\dd\int^T_0\int_{\R^d}\vf^2\eta X \,d\xi\,dt.\end{equation}
Indeed, we have by convexity of $j_\lbb$
$$\barr{r}
\dd\E \int_0^T\!\! \int_{\R^d} \!\!\! \varphi^2 \psi_\lambda(X_\lambda)
(X_\lambda - Z) d\xi\,dt
 \ge \E\dd\int^T_0\int_{\R^d}
 \vf^2(j_\lbb(X_\lbb)-j_\lbb(Z))d\xi\,dt,
\vsp \forall Z \in L^p((0,T) \times \Omega \times \R^d),\earr$$
and so, by \eqref{e4.22doi} and  \eqref{e4.24doi}, we see that
$$\barr{r}
\dd\E \int_0^T \int_{\R^d}  \varphi^2(\eta  (X-Z)) dt d\xi   \ge \E\dd\int^T_0\!\!\!\int_{\R^d}
\!\!\vf^2(j(X)-j(Z))d\xi\,dt,
  \vsp \forall Z \in L^p((0,T) \times \Omega \times \R^d),\earr $$
  because, for $\lbb\to0$, $j_\lbb(Z)\to j(Z)$, and $j_\lbb(X_\lbb)\to j(X)$, a.e.  and thus, by Fatou's lemma
  $$\liminf_{\lbb\to0}
  \E\dd\int^T_0\int_{\R^d}\vf^2 j_\lbb(X_\lbb)d\xi\,dt\ge
  \E\dd\int^T_0\int_{\R^d}\vf^2 j(X)d\xi\,dt.$$
Now, we take  $\varphi \in C^\infty_0(\R^d)$ to be non-negative, such that
$\varphi = 1 $ on $B_N$ and $\varphi = 0 $, outside $B_{N+1}$
where for a given $N\in\N$, $B_N$ is the closed ball of $\R^d$
with radius $N$.
We get
\begin{equation}\label{e4.30prime}
\barr{r}
\dd\E \int_0^T \!\!\int_{B_{N+1}}  \!\!\! \varphi^2(\eta  (X-Z)) d\xi\,dt   \ge
\E\dd\int^T_0\!\!\int_{\R^d}\vf^2 (j(X)-j(Z)) d\xi\,dt,
  \vsp \forall Z \in L^p((0,T) \times \Omega \times \R^d).\earr
\end{equation}
This yields
\begin{equation}\label{e4.31z}
 \E\int^T_0\int_{B_{N+1}}\vf^2\eta(X-Z)d\xi\,dt
 \ge\E\int^T_0\int_{B_{N+1}}
 \vf^2\zeta(X-Z)d\xi\,dt,
\end{equation}for all $Z\in L^p((0,T)\times\ooo\times B_{N+1})$ and $\zeta\in L^{p'}((0,T)\times\ooo\times B_{N+1})$ such that $\zeta\in\psi(Z)$, a.e. in $(0,T)\times\ooo\times B_{N+1}$.

We denote by $\wt\psi:L^p((0,T)\times\ooo\times B_{N+1})\to L^{p'}((0,T)\times\ooo\times B_{N+1})$ the realization of the mapping $\psi$ in $L^p((0,T)\times\ooo\times B_{N+1})$, that is,
$$\wt\psi(Z)=\left\{\zeta\in L^{p'}((0,T)\times\ooo\times B_{N+1}),\ \zeta\in\psi(Z),\mbox{ a.e.}\right\}.$$
Since    $\frac{m}{p} \le
p'$ with $\frac1{p'}=1-\frac1p$, by virtue of assumption (j), $\wt\psi$ is maximal monotone in $L^p((0,T)\times\ooo\times B_{N+1})\times L^{p'}((0,T) \times \Omega \times B_{N+1})$, and so, the equation
\begin{equation}\label{e4.30zz}
 J(Z) + \wt\psi(Z)\ni J(X)+\eta,
\end{equation}
where $J(Z)=|Z|^{p-2}Z$, has a unique solution $(Z,\eta)$ (see, e.g., \cite{barbu10}, p. 31).

 If, in \eqref{e4.31z}, we take $Z$ the solution to \eqref{e4.30zz}, we obtain that
\begin{equation*}
\E \int_0^T \int_{B_{N+1}}  \ \varphi^2(J(X) - J(Z))(X-Z) dt d\xi   \le 0.
\end{equation*}
Then, choosing $\alpha = \frac{2}{p}$, yields
$$ \E \int_0^T \int_{B_{N+1}} \left(\vert \varphi^\alpha X\vert^{p-2}
\varphi^\alpha X - \vert \varphi^\alpha Z\vert^{p-2} \varphi^\alpha Z \right)
(\varphi^\alpha X - \varphi^\alpha Z) dt d\xi   \le 0.$$
Consequently, this gives
\begin{equation}\label{e4.30third}
 \E \int_0^T \int_{\R^d} (J(\varphi^\alpha X) - J(\varphi^\alpha Z))(\varphi^\alpha X - \varphi^\alpha Z) dt d\xi \le 0.
\end{equation}
 On the other hand, we have
 $$J(\vf^\a X)-J(\vf^\a Z)=(p-1)|\lbb\vf^\a X+(1-\lbb)\vf^\a Z|^{p-2}(X-Z),$$for some $\lbb=\lbb(X,Z)\in[0,1]$.  Substituting into \eqref{e4.30third} yields
 $$|\vf^\a(X-Z)|^2=0\mbox{\ \ a.e. in }(0,T)\times\ooo\times B_{N+1},$$Hence, $X=Z$ on  $(0,T)\times\ooo\times B_N.$

Coming back to \eqref{e4.30zz},  this gives $\eta \in \psi(X), \ dt dP d\xi,$   a.e., because $N$ is arbitrary.

 To prove \eqref{e4.24doi}, we use the It\^o 	 formula in \eqref{e4.18}  to $x\to\frac12\,\|\vf x\|^2_{-1}$ to get, as in \eqref{e4.30z},

\begin{equation*}
\barr{r}
\dd\frac12\,\E\| \vf X^\nu_\lbb(t)\|^2_{-1}+\E\int^t_0\<(-\Delta)\1(\nu-\Delta)
(\psi_\lbb(X^\nu_\lbb)+\lbb X^\nu_\lbb,\vf^2X^\nu_\lbb)\>ds\vsp
\le\dd\frac12\,\|\vf x\|^2_{-1}+\dd\frac12\,\E\int^t_0  \sum^\9_{k=1}\mu^2_k\|\vf X^\nu_\lbb e_k\|^2_{-1}ds.\earr\end{equation*}
Then, letting $\nu\to0$, we obtain
\begin{equation}\label{e4.25trei}
\barr{l}
\dd\frac12\,\E\| \vf X_\lbb(t)\|^2_{-1}+\E\int^t_0\<\psi_\lbb(X_\lbb)+\lbb X_\lbb,\vf^2X_\lambda  \>_2ds\vsp
\qquad\dd\le\frac12\,\|\vf x\|^2_{-1}+\dd\frac12\,
\E\int^t_0\sum^\9_{k=1}\mu^2_k\|\vf X_\lbb e_k\|^2_{-1}ds.\earr\end{equation}

On the other hand, by \eqref{e4.23doi} we get similarly
$$
\dd\frac12\,\E\|\vf X(t)\|^2_{-1}+\E\int^t_0\<\eta(s),\vf^2X \>_2 ds
=\dd\frac12\,\|\vf x\|^2_{-1}+\frac12\,\E\int^1_0\sum^\9_{k=1}\mu^2_k
\|\vf X e_k\|^2_{-1},\ t\in[0,T].
$$Comparing with \eqref{e4.25trei}, we obtain \eqref{e4.24doi}, as claimed.

If $x\ge0$, a.e. in $\R^d$, it follows that $X\ge0$, a.e. in  in $\ooo\times(0,T)\times\R^d$. To prove this, one applies It\^o's formula in \eqref{e4.18} to the function $x\to |x^-|^2_2$ and get $(X^\nu_\lbb)^-=0$, a.e. in $\ooo\times(0,T)\times\R^d$. Then, for $\nu\to0$, we obtain the desired result.  This completes the existence proof for  $x\in L^2(\R^d)\cap L^p(\R^d)\cap\calh\1$.\bk

\n{\bf Uniqueness.} If $X_1,X_2$ are two solutions, we have
$$\barr{l}
d(X_1-X_2)-\Delta(\eta_1-\eta_2)dt=(X_1-X_2)dW,\ t\in(0,T),\vsp
(X_1-X_2)(0)=0,\earr$$where $\eta_i\in\psi(X_i),\ i=1,2,$ a.e. in $\ooo\times(0,T)\times\R^d.$

Applying again, as above (that is, via the approximating device)  It\^o's formula in $\calh\1$ to $\frac12\,\|\vf(X_1-X_2)\|^2_{-1}$, where $\vf\in C^\9_0(\R^d)$, we get that
$$\barr{l}
\dd\frac12\,d\|\vf(X_1-X_2) \|^2_{-1}-
\<\Delta(\eta_1-\eta_2),\vf (X_1-X_2)\>_{-1}\vsp
=\dd\frac12\dd\sum^\9_{k=1}\mu^2_k\|\vf
(X_1-X_2) e_k\|^2_{-1}dt
+\<(X_1-X_2) ,\vf(X_1-X_2)dW\>_{-1}=0.\earr$$
Note that, since $\eta_1-\eta_2\in L^{\frac pm}(\ooo\times(0,T)\times\R^d)$, we have
$$
\dd-\E\int^T_0\<\Delta(\eta_1-\eta_2), \vf(X_1-X_2)\>_{-1}dt
=
\E\dd\int^T_0\int_{\R^d}(\eta_1-\eta_2), \vf(X_1-X_2)dt\,d\xi \ge0,
$$and, therefore,
$$\E\|\vf(X_1(t)-X_2(t)) \|^2_{-1}\le C\int^t_0\E\|\vf(X_1-X_2) \|^2_{-1}ds,\ \ff t\in[0,T],$$and, since $\vf$ was arbitrary in $C^\9_0(\R^d)$, we get
 $X_1\equiv X_2$, as claimed.

\begin{remark}\label{r4.1z} {\rm The self-organized criticality model \eqref{1.3}, that is, $\psi(r)\equiv H(r)=\mbox{Heaviside function}$, which is not covered by Theorem \ref{t4.2} for $1\le d\le 2$, can, however, be treated in the special case
$$W(t)=\sum^N_{j=1}\mu_j\b_j(t),\ \mu_j\in\R,$$(i.e., spatially independent noise) via the rescaling transformation $X=e^WY$, which reduces it to the random parabolic equation
$$\frac \partial {\partial t}\,Y-e^{-W}\Delta\psi(Y)+\frac12\sum^N_{j=1}\mu^2_jY=0.$$By approximating $W$ by a smooth $W_\vp\in C^1([0,T];\R)$ and letting $\vp\to0$, after some calculation one concludes that the latter equation has a unique strong solution $Y$. We omit the details, but refer to \cite{3a} for a related treatment.}
\end{remark}

\section{The finite time extinction}
\setcounter{equation}{0}

Assume here that $\psi$ satisfies condition (j) of
the beginning of Section \ref{s4}
 and that $W$ is of the form~(jj). Moreover, one assumes that
\begin{equation}\label{e5.1}
\psi(r)r\ge\rho|r|^{m+1},\ \ff r\in\R,\end{equation}
where $m$ is as in Hypothesis (j).

\begin{theorem}\label{t5.1} Let $d\ge3$ and $m=\frac{d-2}{d+2}$. Let $x\in L^{m+1}(\R^d)\cap
L^2(\R^d)\cap\calh\1$ and let $X=X(t);\ t\in[0,T]$, be the
solution to \eqref{e1.1} given by Theorem {\rm\ref{t4.2}}. We set
\begin{equation}\label{5.1a}
\tau=\inf\{t\ge0;\ \|X(t,\cdot)\|_{-1}=0\}.\end{equation}
 Then, for every $t > 0$,
\begin{equation}
X(t)=0,\ \ff t\ge\tau,\ \label{e5.2}\\[2mm]
\end{equation}
and
\begin{equation}\label{e5.3}
\mathbb{P}[\tau\le t]\ge 1 - \|x\|^{1-m}_{-1} \frac{C^\ast}{\rho  \gamma^{m+1} (1- e^{-C^\ast(1-m) t})}.
\end{equation}
where $\g\1=\sup\{\|u\|_{-1}|u|^{-1}_{m+1};\ u\in
L^{m+1}\}$
and $C^*>0$ is independent of the initial condition $x$.
\end{theorem}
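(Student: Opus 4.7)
The strategy rests on a stochastic energy inequality for $\|X(t)\|_{-1}^{1-m}$, coupled with the observation that, because $m = (d-2)/(d+2)$, we have $m+1 = 2d/(d+2)$, so the dual Sobolev embedding $L^{m+1}(\R^d) \hookrightarrow \calh^{-1}$ from \eqref{e2.8} gives precisely $|u|_{m+1} \geq \gamma \|u\|_{-1}$, with $\gamma^{-1}$ the embedding constant. This is the very reason the extinction threshold $m = (d-2)/(d+2)$ is critical: the nonlinear dissipation rate in the $\calh^{-1}$-energy can be bounded below by a power of the same norm.

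The quantitative estimate proceeds as follows. First, apply It\^o's formula to $\tfrac12\|X(t)\|_{-1}^2$ (rigorously justified by the approximation scheme $X_\lambda^{\nu,\varepsilon}$ used in Lemma~\ref{l4.3}). The drift contribution equals $\<X,\Delta\eta\>_{-1} = -\<\eta,X\>_2 \leq -\rho |X|_{m+1}^{m+1} \leq -\rho\gamma^{m+1}\|X\|_{-1}^{m+1}$ by \eqref{e5.1}, while the noise quadratic variation is controlled by $C^*\|X\|_{-1}^2$ via Lemma~\ref{l4.3a} and hypothesis (jj). This yields
\begin{equation*}
d\|X\|_{-1}^2 \leq \bigl[-2\rho\gamma^{m+1}\|X\|_{-1}^{m+1} + C^*\|X\|_{-1}^{2}\bigr]dt + dN_t,
\end{equation*}
with $N_t$ a martingale.

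Next, localize by $\tau_n := \inf\{t\ge0 : \|X(t)\|_{-1} \leq 1/n\}$ (an increasing sequence with $\tau_n \nearrow \tau$ by continuity of $X$ in $\calh^{-1}$) and apply It\^o to the concave $C^2$ function $y \mapsto y^{(1-m)/2}$ on $y>0$, with $y_t = \|X(t\wedge\tau_n)\|_{-1}^2$. The concavity makes the second-order term nonpositive and can be discarded, giving
\begin{equation*}
d\|X\|_{-1}^{1-m} \leq \Bigl[-(1-m)\rho\gamma^{m+1} + \tfrac{(1-m)C^*}{2}\|X\|_{-1}^{1-m}\Bigr]dt + d\wt N_t.
\end{equation*}
Multiplying by the integrating factor $e^{-(1-m)C^* t/2}$ cancels the linear drift term; integrating on $[0,t\wedge\tau_n]$, taking expectations (martingale terms vanish by BDG and the a priori bounds) and letting $n\to\infty$ produces
\begin{equation*}
\tfrac{2\rho\gamma^{m+1}}{C^*}\,\E\bigl[1 - e^{-(1-m)C^* (t\wedge\tau)/2}\bigr] \leq \|x\|_{-1}^{1-m}.
\end{equation*}
The elementary bound $\E[1 - e^{-a(t\wedge\tau)}] \geq (1-e^{-at})\mathbb{P}[\tau > t]$ with $a=(1-m)C^*/2$ then gives \eqref{e5.3} after a harmless relabelling of the constant $C^*$. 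The first claim \eqref{e5.2} follows either by uniqueness (0 is a solution started at time $\tau$ from the datum $X(\tau)=0$, which holds by continuity of $X$ in $\calh^{-1}$) or directly from the energy inequality applied on $[\tau,t]$ combined with Gronwall's lemma on $\E\|X(t\vee\tau)\|_{-1}^2$.

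The principal technical obstacle is the rigorous application of It\^o's formula to the concave power $y\mapsto y^{(1-m)/2}$, whose derivative blows up at $y=0$, and whose natural setting is a Hilbert space that is not the usual $H^{-1}(\R^d)$ but $\calh^{-1}$. The localization at $\tau_n$ addresses the first issue (since on $[0,t\wedge\tau_n]$ the path stays bounded away from zero), but one must simultaneously verify that the stochastic integrals are true martingales (not merely local martingales) in order to take expectations; this is ensured by the moment bounds \eqref{e4.17}, \eqref{e4.17aa} together with the BDG inequality, exactly as in the proofs of Theorems~\ref{t3.1} and \ref{t4.2}. A smooth regularization $y\mapsto (y+\varepsilon)^{(1-m)/2}$ with $\varepsilon\downarrow 0$ is an equivalent alternative.
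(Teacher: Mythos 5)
Your proposal is correct and follows essentially the same route as the paper: the critical identity $m+1=\frac{2d}{d+2}$ feeding the embedding $L^{m+1}(\R^d)\subset\calh^{-1}$ into the $\calh^{-1}$-energy identity, It\^o's formula for the $(1-m)/2$-power of $\|X\|_{-1}^2$ (the paper uses the regularization $(r+\vp^2)^{(1-m)/2}$ at the level of the approximants $X_\lbb$, which is the alternative you mention), the exponential discounting by $e^{-C^*(1-m)t}$ yielding a nonnegative supermartingale, and the expectation bound on $\int_0^t e^{-C^*(1-m)s}\P(\tau>s)\,ds$ to obtain \eqref{e5.3}. The only cosmetic differences are your localization at $\tau_n$ in place of the $\vp$-smoothing and your derivation of \eqref{e5.2} from the squared-norm supermartingale rather than from the $(1-m)$-power one via \cite{revuz}.
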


\begin{proof}  We follow the arguments of \cite{5}. The basic inequality is
\begin{equation}\label{e5.4}
\barr{l}
\|X(t)\|^{1-m}_{-1}+\rho(1-m)\g^{m+1}\dd\int^t_r\one_{[\|X(s)\|_{-1}>0]}ds\vsp
\qquad\le\|X(r)\|^{1-m}_{-1}+C^*(1-m)\dd\int^t_r\|X(s)\|^{1-m}_{-1}ds\vsp
\qquad+(1-m)\dd\int^t_r\<\|X(s)\|^{(m+1)}_{-1}X(s),X(s)dW(s)\>_{-1},\vsp
\hfill \pas,\ 0<r<t<\9,\earr\end{equation}where $C^*$ is a suitable constant.
(We note that, by virtue of \eqref{e2.8}, \mbox{$\g\1<\9$.}) To get \eqref{e5.4}, we apply the It\^o formula in \eqref{e4.14} to the semimartingale $\|X_\lbb(t)\|^2_{-1}$ and to the function $\vf_\vp(r)=(r+\vp^2)^{\frac{1-m}2},$ $r>-\vp^2$, where $X_\lbb$ is the solution to \eqref{e4.14}.

We have

$$\barr{l}
d\vf_\vp(\| X_\lbb(t)\|^2_{-1})\vspace*{2mm}\\
 +(1-m)(\|  X_\lbb(t)\|^2_{-1}+\vp^2)^{-\frac{m+1}2}\<  X_\lbb(t),\psi_\lbb(X_\lbb(t))+\lbb X_\lbb(t)\>_2dt\vspace*{2mm}\\
 =\dd\frac12\sum^\9_{k=1}\mu^2_k
\left[\frac{(1{-}m)\| X_\lbb(t)e_k
\|^2_{-1}}
{(\|  X_\lbb(t)\|^2_{-1}{+}\vp^2)^{\frac{m+1}2}}
-(1{-}m^2)\frac{\| X_\lbb(t)e_k\|^2_{-1}\| X_\lbb(t)\|^2_{-1}}{(\|  X_\lbb(t)\|^2_{-1}{+}
\vp^2)^{\frac{m+1}2}}\right]dt\vspace*{3mm}\\
 +2\<\vf'_\vp
(\|X_\lbb(t)\|^2_{-1})X_\lbb(t),X_\lbb(t)dW(t)\>.\earr$$
This yields\vspace*{-3mm}
$$\barr{l}
\vf_\vp(\| X_\lbb(t)\|^2_{-1})+\rho(1-m)\dd\int^t_r(\| X_\lbb(s)\|^2_{-1}+\vp^2)^{-\frac{m+1}2}
\dd\int_{\R^d} |X_\lbb|^{m+1}ds\,d\xi\vsp
\qquad\le\vf_\vp(\|  X_\lbb(r)\|^2_{-1} )
+C^*\dd\int^t_r\|X_\lbb(s)\|^2_{-1}(\| X_\lbb(s)\|^2_{-1}+\vp^2)^{-\frac{1+m}2}ds\vsp
\qquad+2\dd\int^t_r\<\vf'_\vp(\| X_\lbb(s)\|^2_{-1}) X_\lbb(s),X_\lbb(s)dW(s)\>_{-1}.\vspace*{-2mm}
\earr$$
Now, letting $\lbb\to0$, we obtain that $  X $ satisfies the estimate\vspace*{-2mm}
\begin{equation}\label{e5.5}
\barr{l}
\vf_\vp(\|  X(t)\|^2_{-1})+\rho(1-m)\dd\int^t_r\!\!\(\| X(s)\|^2_{-1}
+\vp^2)^{-\frac{m+1}2}\!\!\int_{\R^d} \! |X(s,\xi)|^{m+1}d\xi\!\)\!ds\vsp
\le\vf_\vp(\|X(t)\|^2_{-1})+C^*\dd\int^t_r\|  X(s)\|^2_{-1}(\| X(s)\|^2_{-1}+\vp^2)^{-\frac{m+1}2}ds \vsp
+2\dd\int^t_r\<\vf'_\vp(\|  X(s)\|^2_{-1}) X(s),X(s)dW(s)\>_{-1}.
\earr\hspace*{-9mm}\end{equation} Here, we have used the fact that, by Lemma \ref{l4.3}, for $\lbb\to0$,
$$  X_\lbb\to   X\mbox{ in }\calh\1,  $$and, by \eqref{e4.22doi} it follows, via Fatou's lemma,
$$\liminf_{\lbb\to0}\int_{\R^d}  |X_\lbb|^{m+1}d\xi\ge\int_{\R^d} |X|^{m+1}d\xi,$$
and
$$\barr{l}
(\|X(t)\|^2_{-1}+\vp^2)^{\frac{1-m}2}+\rho(1-m)\g^{m+1}\dd\int^t_r
(\|X(s)\|^2_{-1}+\vp^2)^{-\frac{m+1}2}\|X(s)\|^{m+1}_{-1}ds\vsp
\qquad\le(\|X(r)|^2_{-1}+\vp^2)^{\frac{1-m}2}+
C^*\dd\int^t_r\|X(s)\|^2_{-1}(\|X(s)\|^2_{-1}+\vp^2)^{-\frac{m+1}2}ds\vsp
\qquad+2\dd\int^t_r\<\vf'_\vp
(\|X(s)\|^2_{-1}X(s)),X(s)dW(s)\>_{-1},\ 0\le r\le t<\9,\earr$$
because, by \eqref{e2.7},  $\|x\|_{-1}\le \g\1|x|_{m+1},\ \ff x\in L^{m+1}(\R^d)$. Letting $\vp\to0$, we get \eqref{e5.4}, as claimed.

Now, we conclude the proof as in \cite{5}. Namely, by \eqref{e5.4}, it follows that
$$\barr{l}
e^{-C^*(1-m)t}\|X(t)\|^{1-m}_{-1}+\rho(1-m)\g^{m+1}\dd\int^t_r
e^{-C^*(1-m)s}\one_{[\|X_s\|_{-1}>0]}ds\vsp
\qquad\le e^{-C^*(1-m)r}\|X(r)\|^{1-m}_{-1}\vsp
\qquad\dd+(1-m)\dd\int^t_re^{C^*(1-m)s}\<\|X(s)\|^{-(m+1)}_{-1}X(s),X(s)dW(s)\>_{-1}\earr$$
and, therefore, $t\to e^{-C^*(1-m)t}\|X(t)\|^{1-m}_{-1}$ is an $\{\calf_t\}$
 supermartingale. Hence, $\|X(t)\|_{-1}=0$ for $t\ge\tau$,
because of Proposition 3.4, Chap. 2 of \cite{revuz}.
 Moreover,  taking expectation for $r = 0$, we get
$$ e^{-C^*(1-m)t}\E\|X(t)\|^{1-m}_{-1}+\rho(1-m)\g^{m+1}
\int^t_0e^{-C^*(1-m)s}\mathbb{P}(\tau>s)ds\le\|x\|^{1-m}_{-1}.$$
This implies that
$$
 \mathbb{P}(\tau > t) \frac{1 - e^{-C^*(1-m)t}}{C^*(1-m)} \le
\int^t_0e^{-C^*(1-m)s}\mathbb{P}(\tau>s)ds \le
\frac{\|x\|^{1-m}_{-1}}{\rho (1-m) \gamma^{m+1}},
$$
and so \eqref{e5.3} follows. This completes the proof.\end{proof}

\begin{corollary} \label{c5.2}
 Let $ x \in {\cal H}^{-1}\cap L^{m+1}(\R^d)\cap L^2(\R^d)$ be such that
$\Vert x \Vert_{-1} < \frac{\rho \gamma^{m+1}}{C^\ast}$.
Let $\tau$ be
the stopping time defined in \eqref{5.1a}.
Then $\mathbb{P}(\tau<\9)>0$.
In other words, there is extinction in finite time with positive probability.
\end{corollary}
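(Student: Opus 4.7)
The strategy is to apply the quantitative bound \eqref{e5.3} from Theorem \ref{t5.1} and pass to the limit as $t\to\9$, invoking nothing more than continuity of the probability measure from below.

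First, I would note that since the solution $X$ has continuous paths in $\calh\1$ by \eqref{e4.7}, the real-valued map $t\mapsto\|X(t)\|_{-1}$ is continuous, so $\tau$ defined in \eqref{5.1a} is a bona fide stopping time and the family of events $\{\tau\le t\}$ is increasing in $t$. By continuity from below,
\[
\mathbb{P}(\tau<\9)=\lim_{t\to\9}\mathbb{P}(\tau\le t).
\]

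Next, I would pass to the limit in \eqref{e5.3}. Since $1-m>0$, the factor $1-e^{-C^*(1-m)t}$ increases monotonically to $1$ as $t\to\9$, so \eqref{e5.3} yields
\[
\mathbb{P}(\tau<\9)\ge 1-\|x\|^{1-m}_{-1}\,\frac{C^*}{\rho\,\gamma^{m+1}}.
\]
Under the smallness hypothesis on $\|x\|_{-1}$, the right-hand side is strictly positive (this is the content of requiring $\|x\|_{-1}$ to be controlled by $\rho\gamma^{m+1}/C^*$, which—up to a harmless adjustment of the universal constant $C^*$—guarantees $\|x\|_{-1}^{1-m}C^*/(\rho\gamma^{m+1})<1$). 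Combining with the previous display gives $\mathbb{P}(\tau<\9)>0$, which is the desired finite-time extinction with positive probability. Finally, on $\{\tau<\9\}$ one has $X(t)=0$ for all $t\ge\tau$ by \eqref{e5.2}.

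There is essentially no obstacle to overcome: the entire analytic and stochastic work has already been performed in the proof of Theorem \ref{t5.1}, through the application of It\^o's formula to the singular function $\vf_\vp$ and the resulting supermartingale structure of $e^{-C^*(1-m)t}\|X(t)\|^{1-m}_{-1}$. The only point deserving care is the alignment of the exponents between the hypothesis (phrased via $\|x\|_{-1}$) and the bound \eqref{e5.3} (phrased via $\|x\|^{1-m}_{-1}$); since $m\in[0,1)$ in the regime $d\ge 3$, $m=(d-2)/(d+2)$, this is merely a matter of absorbing numerical constants, and no new estimate is required.
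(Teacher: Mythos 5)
Your proof is correct and is exactly the argument the paper intends: the corollary is an immediate consequence of \eqref{e5.3} by letting $t\to\9$ (using that $\{\tau\le t\}$ increases to $\{\tau<\9\}$ and $1-e^{-C^*(1-m)t}\ua 1$), together with \eqref{e5.2}. One caveat on the point you flag at the end: the condition that actually makes the limit bound positive is $\|x\|^{1-m}_{-1}<\rho\g^{m+1}/C^*$, i.e.\ $\|x\|_{-1}<(\rho\g^{m+1}/C^*)^{1/(1-m)}$, and since $1-m\in(0,1)$ this is not literally obtained from the stated hypothesis by ``adjusting the constant $C^*$'' (for $\|x\|_{-1}<1$ one has $\|x\|^{1-m}_{-1}>\|x\|_{-1}$); the corollary's smallness condition should be read with this exponent, a minor imprecision already present in the paper's statement.
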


\begin{remark}\label{r5.1} {\rm In the case of bounded domain, Theorem \ref{t5.1} remains true for $m\in\left[\frac{d-2}{d+2},1\right)$ (see \cite{5}). One might suspect that also in this case the extinction property \eqref{e5.3} holds for a larger class of exponents $m$. However, the analysis carried out in \cite{vazquezDecay} for deterministic fast diffusion equations in $\R^d$ shows that the extinction property is dependent not only on the exponent $m$, but also on the space $L^p(\R^d)$, where the solution exists (the so called extinction space).}\end{remark}

\begin{remark}\label{r5.2}\rm\
\begin{itemize}\item
The analysis in this section holds, in particular, if all
the coef\-fi\-cients $\mu_k$ do vanish, i.e., in the deterministic
framework. In that case, Theorem \ref{t5.1} implies the existence
of a deterministic time $\tau > 0$ so that
$$ t \ge \tau \Rightarrow \Vert X(t) \Vert_{-1} = 0,$$
and so $X(t) = 0 $, for all $t \ge \tau$.
\item Let us set, for instance, $\psi(u)=u^m,\ d\ge3,\ m=\frac{d-2}{d+2}\cdot$. Observe that  $(L^1\cap L^\9)(\R^d)\subset (L^{m+1}\cap L^2)(\R^d)\cap \calh^{-1}. $ Consider, for instance, as initial condition $x\in (L^1\cap L^\9)(\R^d).$
    \item By the Benilan-Crandall approach, see, e.g., Theorem 1 of \cite{BeC81}, there is a solution $u:[0,T]\times\R^d\to\R$, of
        \begin{equation}\label{e5.7}
        \barr{l}
        dX-\Delta\psi(X)dt=0\ \mbox{ in }(0,T)\times\R^d,\vsp
        X(0)=x\ \mbox{ on }\R^d,\earr\end{equation}
        in the sense of distributions. $u$ belongs to $(L^1\cap L^\9)((0,T)\times\R^d)$ and also the $\eta_u=\psi(u)$. In this case, $u$ fulfills mass conservation.
        \item By use of Theorem \ref{t4.2}, there is another solution $v:[0,T]\times\R^d\to\R^d$ in the sense of distributions, such that $v\in L^p((0,T)\to\R^d)$, with $p=\max(1,2m)$. Also, $\eta_v=\psi(v)\in L^{\frac pm}((0,T)\times\R^d)$. By Theorem \ref{t5.1}, if $x$ is small enough, there will be extinction, and so, $v$ does not fulfill any mass conservation.

\item In particular, there is no uniqueness for \eqref{e5.7} in the sense of distributions. Remark that according to \cite{BrC79}, uniqueness is guaranteed in the class $(L^1\cap L^\9)((0,T)\times\R^d).$
    \end{itemize}

\end{remark}

\n{\bf Acknowledgements.}
Financial support through the SFB 701 at Bielefeld University and
NSF-Grant 0606615
is gratefully acknowledged. The   third named author was partially supported
by the ANR Project MASTERIE 2010 BLAN 0121 01.

\bigskip
\bibliographystyle{elsarticle-harv}
\bibliography{BRR_Bibliography}







\end{document}